\documentclass[10pt, leqno]{article}

\usepackage{amsmath,amssymb,amsthm, epsfig}

\usepackage{amsmath}

\usepackage{amssymb}

\usepackage{esint}

\usepackage{color}

\usepackage{ulem}

\usepackage{epsfig}

\usepackage[mathscr]{eucal}

%


\usepackage{mathptmx} 
\usepackage[scaled=0.90]{helvet} 
\usepackage{courier} 
\normalfont
\usepackage[T1]{fontenc}


\pagestyle{myheadings}

\title{Cavity problems in discontinuous media}
\author{\it by \smallskip \\
Disson dos Prazeres   \quad and  \quad Eduardo V. Teixeira }

\date{}





\def \div {\mathrm{div}}
\def \dist {\mathrm{dist}}

\def \suchthat {\ \big | \ }

\def \Leb {\mathscr{L}^n}


\newtheorem{theorem}{Theorem}[section]

\newtheorem{lemma}[theorem]{Lemma}

\newtheorem{corollary}[theorem]{Corollary}

\theoremstyle{definition}

\newtheorem{definition}[theorem]{Definition}

\theoremstyle{remark}

\numberwithin{equation}{section}






\newcommand{\intav}[1]{\mathchoice {\mathop{\vrule width 6pt height 3 pt depth  -2.5pt
\kern -8pt \intop}\nolimits_{\kern -6pt#1}} {\mathop{\vrule width
5pt height 3  pt depth -2.6pt \kern -6pt \intop}\nolimits_{#1}}
{\mathop{\vrule width 5pt height 3 pt depth -2.6pt \kern -6pt
\intop}\nolimits_{#1}} {\mathop{\vrule width 5pt height 3 pt depth
-2.6pt \kern -6pt \intop}\nolimits_{#1}}}



\begin{document}
\maketitle

\begin{abstract}

We study cavitation type equations, $\text{div}(a_{ij}(X) \nabla u) \sim \delta_0(u)$, for bounded, measurable elliptic media $a_{ij}(X)$. De Giorgi-Nash-Moser theory assures that solutions are $\alpha$-H\"older continuous within its set of positivity, $\{u>0\}$, for some exponent  $\alpha$  strictly less than one. Notwithstanding, the key, main result proven in this paper provides a  sharp Lipschitz regularity estimate for such solutions along their free boundaries, $\partial \{u>0 \}$.  Such a sharp estimate implies geometric-measure constrains for the free boundary. In particular, we show that the non-coincidence $\{u>0\}$ set has uniform positive density and that the free boundary has finite $(n- \varsigma )$-Hausdorff measure, for a universal number $0<  \varsigma  \le 1$. 

\medskip
\noindent \textbf{AMS Subject Classifications:} 35B65, 35R35.

\end{abstract}

\section{Introduction}

Given a Lipschitz bounded domain $\Omega \subset \mathbb{R}^n$, a bounded measurable elliptic matrix $a_{ij}(X)$, i.e. a symmetric matrix with varying coefficients satisfying the $(\lambda, \Lambda)$-ellipticity condition  
\begin{equation}\label{ellip}
	\lambda \text{Id} \leq a_{ij}(X)\leq \Lambda \text{Id},
\end{equation}
and a nonnegative boundary data $\varphi \in L^2(\partial \Omega)$, we are interested in studying  local minimizers $u$ of the discontinuous functional
\begin{equation}\label{functional AC}
	\mathscr{F} (u)=\int_{\Omega} \left \{\frac{1}{2}\langle a_{ij}(X)\nabla u,\nabla u\rangle + 							\chi_{\{u>0\}} \right \} dX \to \text{min},
\end{equation}
among all competing functions $u \in H^1_\varphi(\Omega) := \{ u \in H^1(\Omega) \suchthat \text{Trace}(u) =  \varphi \}$. 

\medskip
\par

The variational problem set in \eqref{functional AC} appears in the mathematical formulation of a great variety of models: jet flows, cavity problems, Bernoulli problems, free transmission problems, optimal designs, just to cite few. Its mathematical treatment has been extensively developed since the epic marking work of Alt and Caffarelli \cite{AC}. The program for studying minimization problems for discontinuous functionals of the form \eqref{functional AC} is nowadays well established in the literature. Existence of minimizer follows by classical considerations. Any minimum is nonnegative provided the boundary data is nonnegative. Minimizers satisfy, in the distributional sense, the Euler-Lagrange equation 
\begin{equation}
	\div (a_{ij}(X) \nabla u) = \mu,
\end{equation}
where $\mu$ is a measure supported along the free boundary.  In particular a minimum of the functional $\mathscr{F}$ is $a$-harmonic within its positive set, i.e.,
$$
	\div (a_{ij}(X) \nabla u) = 0, \quad \text{ in } \{u>0 \} \cap \Omega.	
$$
By pure energy considerations, one proves that minimizers grow linearly alway from their free boundaries. Finally, if $a_{ij}$ are, say, H\"older continuous, then the free boundary $\partial \{u>0\}$ is of class $C^{1,\alpha}$ up to a possible negligible singular set. In such a scenario, the free boundary condition
$$
	\langle a_{ij}(\xi) \nabla u(\xi), \nabla u(\xi) \rangle = \text{Const.}
$$
then holds in the classical sense along the regular part of the free boundary, in particular for all  $\xi \in \partial_{\text{red}} \{u>0\} \cap \Omega$.

\medskip
\par

A decisive, key step, though, required in the program for studying variational problems of the form \eqref{functional AC}, concerns Lipschitz estimates of minimizers. However, if no further regularity assumptions upon the coefficients $a_{ij}(X)$ is imposed, even $a$-harmonic functions, $\div (a_{ij}(X) \nabla h) = 0$, may fail to be Lipschitz continuous. That is, the universal H\"older continuity exponent granted by De Giorgi-Nash-Moser regularity theory may be  strictly less than 1, even for two-dimensional problems. Such a technical constrain makes the study of local minima to \eqref{functional AC} in discontinuous media rather difficult from a rigorous mathematical viewpoint.  

\medskip
\par

The above discussion brings us to the main goal of this present work. Even though it is hopeless to obtain gradient bounds for minimizers of functional \eqref{functional AC} in $\Omega$, we shall prove that, any minimum is universally Lipschitz continuous along its free boundary, $\partial \{ u > 0 \} \cap \Omega$, see  \cite{T2, T3} for improved estimates that hold only along (non-physical) free boundaries, see also \cite{RTU}. Such an estimate is strong enough to carry on a geometric-measure analysis near the free boundary, which in particular implies that the non-coincidence set has uniform positive density and that the free boundary has finite $(n- \varsigma )$-Hausdorff measure, for a universal number $0<  \varsigma  \le 1$. We shall establish the following result:

\begin{theorem}\label{Lip AC} Let $u$ be a nonnegative local minimum of the functional \eqref{functional AC} and $Z_0 \in \partial \{u> 0 \} \cap \Omega$ be a generic interior free boundary point. Then
$$
	C^{-1} r \le \sup\limits_{B_r(Z_0)} u \le C r,
$$
for a constant $C>0$ depending only on dimension, ellipticity constants and $\|u\|_{L^2(\Omega)}$. In particular, for another universal constant $\theta >0$,
$$
	\mathcal{L}^n(\{u>0\} \cap B_{r}(Z_0))\geq \theta r^n,
$$
	for all $0< r\ll 1$. Furthermore there is a universal constant $0< \varsigma \le 1$ such that
	$$
		\mathrm{dim}_{\mathcal{H}}(\partial\{u>0 \} )\leq n- \varsigma,
	$$
	where $dim_{\mathcal{H}}(E)$ means the Hausdorff dimension to the set $E$.
\end{theorem}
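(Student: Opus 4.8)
The plan is to establish the two‑sided estimate first --- the lower bound being essentially the classical non‑degeneracy, the upper bound the genuinely new point --- and then to read off the volume density and the Hausdorff dimension bounds as geometric consequences. For the lower bound $C^{-1}r\le\sup_{B_r(Z_0)}u$, which is the linear growth of minima away from their free boundaries already mentioned in the Introduction, I would run the standard energy comparison: compare $u$ with the competitor equal to $u$ outside $B_r(Z_0)$ and dragged down to $0$ on $B_{r/2}(Z_0)$ along a capacitary profile of the annulus --- such profiles replace the explicit barriers unavailable in discontinuous media, using that the $(\lambda,\Lambda)$‑capacities of concentric balls are comparable. A sufficiently small value of $\sup_{B_r(Z_0)}u$ relative to $r$ makes this competitor strictly cheaper, since the volume it gains in the $\chi_{\{u>0\}}$ term dominates its $O\!\big(r^{n-2}(\sup_{B_r(Z_0)}u)^2\big)$ Dirichlet cost, contradicting minimality and $Z_0\in\partial\{u>0\}$. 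Everything here is scale invariant, because dilations $u(Z_0+r\,\cdot)/r$ of local minima are again local minima of a functional with the same ellipticity constants; the reduction to unit scale is where the dependence on $\|u\|_{L^2(\Omega)}$ enters, through a Caccioppoli/local‑boundedness bound on a fixed interior ball. By Harnack this also yields the interior non‑degeneracy $u(X)\ge c\,\dist(X,\partial\{u>0\})$ on $\{u>0\}$, to be used later.

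The heart of the proof is the upper bound $\sup_{B_r(Z_0)}u\le Cr$, which cannot follow from De Giorgi--Nash--Moser regularity alone. I would deduce it from the dichotomy: there exist universal $\mu_0\in(0,\tfrac12)$ and $L_0\ge1$ such that for every local minimum $u$, every $Z_0\in\partial\{u>0\}\cap\Omega$ and every sufficiently small $r$,
\[
  \sup_{B_{\mu_0 r}(Z_0)}u\ \le\ \max\Big\{\tfrac12\,\mu_0\sup_{B_r(Z_0)}u,\ \ L_0\,\mu_0 r\Big\}.
\]
Iterating along $r_k=\mu_0^k r$ gives $\sup_{B_{r_k}(Z_0)}u\le L_1 r_k$ with $L_1=\max\{\tfrac1r\sup_{B_r(Z_0)}u,\,2L_0\}$, hence $\sup_{B_\rho(Z_0)}u\le C\rho$ for all small $\rho$, with $C$ of the asserted dependence. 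The dichotomy I would prove by contradiction and compactness. If it failed for every $L_0$, then, translating $Z_0$ to the origin and rescaling to the offending radius, there would be local minima $u_m$ of functionals with $(\lambda,\Lambda)$‑elliptic coefficients $a^m_{ij}$, with $0\in\partial\{u_m>0\}$, $M_m:=\sup_{B_1}u_m$, and $\sup_{B_{\mu_0}}u_m>\max\{\tfrac12\mu_0 M_m,\ L_0\mu_0\}$; the second inequality forces $M_m\to\infty$. Setting $v_m:=u_m/M_m$, each $v_m$ minimizes $\int\!\big(\tfrac12\langle a^m_{ij}\nabla v,\nabla v\rangle+\varepsilon_m^2\,\chi_{\{v>0\}}\big)$ with $\varepsilon_m:=1/M_m\to0$, and $v_m\ge0$, $v_m(0)=0$, $\sup_{B_1}v_m=1$, $\sup_{B_{\mu_0}}v_m>\mu_0/2$. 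Comparing $v_m$ with its $a^m$‑harmonic replacement $h_m$ on $B_{3/4}$ and using minimality, $v_m-h_m\to0$ in $H^1(B_{3/4})$; meanwhile the bounded $a^m$‑harmonic $h_m$ are uniformly $C^\alpha$ on $B_{1/2}$ by De Giorgi--Nash--Moser, and the minimizers $v_m$ are uniformly $C^\alpha$ on $B_{1/2}$ as well, since minimality places them in a De Giorgi class (the bounded cavity term contributing only a bounded extra term) and the corresponding Hölder seminorms survive the normalization. Passing to subsequential limits, $v_m\to v_\infty$ uniformly on $B_{1/2}$ with $v_\infty=\lim h_m$; thus $v_\infty\ge0$, $v_\infty(0)=0$ and $\sup_{B_{\mu_0}}v_\infty\ge\mu_0/2$. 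But each $h_m$ is a nonnegative $a^m$‑harmonic function, so it satisfies Harnack's inequality with a universal constant, and that inequality is stable under uniform convergence; hence $v_\infty$ is a nonnegative function satisfying Harnack with $v_\infty(0)=0$, which forces $v_\infty\equiv0$ on the connected set $B_{1/2}$ --- contradicting $\sup_{B_{\mu_0}}v_\infty\ge\mu_0/2>0$.

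The main obstacle is precisely this compactness step. Because the coefficients are merely bounded there is no fixed limiting operator and blow‑up limits cannot be classified, so the whole argument must run through the qualitative stability of the De Giorgi--Nash--Moser estimates --- Harnack for the auxiliary $a^m$‑harmonic functions, the $C^\alpha$ bound for the minimizers --- together with the degeneration $\varepsilon_m\to0$ of the cavity cost, which is what turns the limit into a genuine Harnack function. The delicate ingredient is a uniform modulus of continuity for $u$ \emph{up to} the free boundary (strictly stronger than the interior estimate quoted in the Introduction), needed so that $v_m(0)=0$ persists in the limit: without it, the limit of the merely $H^1$‑small supersolutions $h_m-v_m$ need not vanish at the origin. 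Supplying this bound is where the variational character of $u$ --- not merely the Euler--Lagrange equation inside $\{u>0\}$ --- is indispensable, through De Giorgi iteration performed directly on $\mathscr F$.

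Granting the two‑sided estimate, the last two assertions are geometric. If $\mathcal L^n(\{u>0\}\cap B_r(Z_0))<\theta r^n$, then, $u$ being a nonnegative $a$‑subsolution, the local maximum bound for subsolutions gives $\sup_{B_{r/2}(Z_0)}u\le C\fint_{B_r(Z_0)}u\le C\theta\,\sup_{B_r(Z_0)}u\le C\theta\cdot C' r$, which combined with $\sup_{B_{r/2}(Z_0)}u\ge C^{-1}r/2$ forces $\theta\ge\theta_0$ for a universal $\theta_0$; contrapositively $\mathcal L^n(\{u>0\}\cap B_r(Z_0))\ge\theta_0 r^n$. Finally I would show that $\partial\{u>0\}$ is uniformly porous: at $Z_0\in\partial\{u>0\}$ and small $r$, non‑degeneracy produces $X_1\in B_{r/2}(Z_0)$ with $u(X_1)\ge C^{-1}r/2$, while applying the upper bound at an arbitrary free boundary point $Z'$ gives $u(X_1)\le C|X_1-Z'|$; hence $\dist(X_1,\partial\{u>0\})\ge\gamma r$ with $\gamma:=1/(2C^2)$, i.e. $B_{\gamma r}(X_1)\subset B_r(Z_0)$ misses $\partial\{u>0\}$ entirely. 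A porous set in $\mathbb R^n$ with porosity constant $\gamma$ has box‑counting, hence Hausdorff, dimension at most $n-\varsigma$ with $\varsigma=\varsigma(n,\gamma)\in(0,1]$, and the same covering count yields local finiteness of $\mathcal H^{n-\varsigma}$; since $\gamma$ is universal, so is $\varsigma$. This part is routine once the two‑sided estimate is in hand.
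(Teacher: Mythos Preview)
Your proposal is correct and follows essentially the same architecture as the paper: a flatness--improvement lemma proved by compactness (minimizers of $\mathscr{F}^{\delta}$ with $\delta\to0$ and a zero point must be small), followed by a scaling iteration, then nondegeneracy plus porosity for the density and Hausdorff--dimension bounds. Two tactical points differ from the paper and are worth recording. First, in the compactness step the paper passes to a limiting functional $\int\langle b_{ij}\nabla u,\nabla u\rangle$ (with $b_{ij}$ a weak limit of the $a^{m}_{ij}$) and invokes the strong minimum principle for its minimizer; your route through the $a^{m}$-harmonic replacements $h_m$ and the stability of the Harnack inequality under uniform limits is arguably cleaner, since it sidesteps the identification of the limiting operator entirely and uses only that the Harnack constant depends on $n,\lambda,\Lambda$. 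Second, your density argument via the subsolution local--maximum estimate differs from the paper's, which instead combines nondegeneracy with the Lipschitz bound to exhibit an explicit ball $B_{\mu r}(\xi_r)\subset\{u>0\}\cap B_r(Z_0)$; the paper's version has the mild advantage of feeding directly into the porosity argument, whereas you rerun nondegeneracy plus the Lipschitz estimate to manufacture the pore, as you correctly do in your final paragraph.
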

\medskip
\par

In this paper we shall develop a  more general analysis as to contemplate singular approximations of the minimization problem \eqref{functional AC}. Let $\beta \in L^\infty(\mathbb{R})$ be a bounded function supported in the unit interval $[0,1]$. For each $\epsilon >0$, we define the integral preserving, $\epsilon$-perturbed potential:
\begin{equation}\label{beta_e}
	\beta_\epsilon(t):=\frac{1}{\epsilon} \beta \left (\frac{t}{\epsilon} \right ),
\end{equation}
which is now supported in $[0,\epsilon]$. Such a sequence of potentials converges in the distributional sense to $\int \beta$ times the Dirac measure $\delta_0$. Consider further
\begin{equation}
	{B}_\epsilon(\xi)=\int_0^\xi \beta_\epsilon(t)dt \to \left (\int \beta(s) ds \right ) \cdot \chi_{\{\xi > 0 \}},
\end{equation}
in the distributional sense. We now look at local minimizers $u_\varepsilon$ to the variational problem
\begin{equation}\label{functional}
	\mathscr{F}_\epsilon(u)=\int_{B_1} \left \{\frac{1}{2}\langle a_{ij}(X)\nabla u,\nabla u\rangle + 							{B}_\epsilon(u) \right \} dX \to \text{min},
\end{equation}
among all competing functions $u \in H^1_\varphi(\Omega) := \{ u \in H^1(\Omega) \suchthat \text{Trace}(u) =  \varphi \}$. There is a large literature on such a class of singularly perturbed equations, see for instance  \cite{BCN, CLW1, CLW2, C-V, MT, RT, T}. It is well established that the functional $\mathscr{F}$ defined in \eqref{functional AC} can be recovered by letting $\epsilon$ go to zero in \eqref{functional}. For each $\epsilon$ fixed though, minimizers of the functional $\mathscr{F}_\epsilon$ is related to a number of other physical problems, such as high energy activations and the theory of flame propagation. Hence, from the applied point of view, it is more appealing to indeed study the whole family of functionals $\left ( \mathscr{F}_\epsilon \right )_{0\le \epsilon \le 1}$. We also mention that the study of minimization problem  \eqref{functional} with no continuity assumption on the coefficients is also motivated by several branch of applications, for instance in homogenization theory, composite materials, etc. 

\medskip
\par

We should also mention the connections this present work has with the theory of free phase transmission problems. This class of problems appears,  for instance, in the system of equations modeling an ice that melts submerged in a heated inhomogeneous medium. For problems modeled within an organized medium (say H\"older continuous coefficients), monotonicity formula \cite{C2} yields Lipschitz estimates for solutions. However, by physical interpretations of the model, it is natural to consider the problem within discontinuous media. Under such an adversity (monotonicity formula is no longer available), Lipschitz estimate along the free boundary has been an important open problem within that theory, see \cite{AT} for discussion. However, if we further assume in the model that the temperature of the ice remains constant, which is reasonable in very low temperatures, then free phase transmission problems fit into the mathematical formulation of this present article; and a Lipschitz estimate becomes available by our main result.

\medskip
\par

We conclude this Introduction by mentioning that the improved, sharp regularity estimate we establish in this work holds true in much more generality. Our approach to obtain Lipschitz estimate along the free boundary extends directly to degenerate discontinuous functionals of the form
$$
	\int F(X, u, \nabla u) dX \to \text{min.}, 
$$  
where 
$$
	F(X,u,\xi) \sim |\xi|^{p-2} A(X)\xi \cdot \xi + f(X)\left( u^{+}\right)^{m} + Q(X) \cdot \chi_{\{u > 0 \}},
$$
with $A(X)$ bounded, measurable elliptic matrix, $f \in L^q(\Omega)$, $q>n$, $1\le m < p$ and $Q$ is bounded away from zero and infinity, see \cite{DP, LT}. Indeed, the proof designed herein is purely nonlinear and uses solely  the Euler-Lagrange equation associated to the minimization problem \eqref{functional}. Hence,  nonvariational cavitation problems, as well as parabolic versions of such models can also be tackled by our methods. 

\section{Preliminaries}

In this Section we gather some results and tools available for the analysis of minimizers to the functional \eqref{functional} (and also to the functional \eqref{functional AC}). The results stated herein follow by methods and approaches available in the literature. We shall briefly comment on the proofs, for the readers' convenience.

\begin{theorem}[Existence of minimizers] \label{prelim-Exist}
 	For each $\epsilon>0$ fixed, there exists at least one minimizer $u_\epsilon \in H^1_\varphi(\Omega)$ to the function \eqref{functional}. Furthermore $u_\varepsilon$ satisfies 
\begin{equation}\label{e-Eq}
	\div (a_{ij}(X)\nabla u_\epsilon) = \beta_\epsilon(u_\epsilon), \quad \text{in } \Omega,
\end{equation}
in the distributional sense. Each $u_\epsilon$ is a nonnegative function, provided the boundary data $\varphi$ is nonnegative.  
\end{theorem}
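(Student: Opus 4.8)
The plan is to prove Theorem~\ref{prelim-Exist} in three independent steps: existence via the direct method, derivation of the Euler--Lagrange equation, and nonnegativity of minimizers.

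\medskip
\noindent\textbf{Step 1: Existence.} First I would apply the direct method of the calculus of variations. The competing class $H^1_\varphi(\Omega)$ is nonempty (it contains, e.g., the $a$-harmonic extension of $\varphi$, or any fixed $H^1$ extension) and the functional $\mathscr{F}_\epsilon$ is bounded below, since $B_\epsilon \ge -\|\beta\|_{L^\infty}$ pointwise and $\Omega$ is bounded, while the gradient term is nonnegative by ellipticity \eqref{ellip}. Take a minimizing sequence $(u_k) \subset H^1_\varphi(\Omega)$. The lower ellipticity bound $\lambda |\nabla u_k|^2 \le \langle a_{ij}\nabla u_k, \nabla u_k\rangle$ together with the uniform bound on $\mathscr{F}_\epsilon(u_k)$ gives a uniform bound on $\|\nabla u_k\|_{L^2(\Omega)}$; combined with the fixed trace and Poincar\'e's inequality this yields a uniform bound on $\|u_k\|_{H^1(\Omega)}$. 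Hence, up to a subsequence, $u_k \rightharpoonup u_\epsilon$ weakly in $H^1(\Omega)$, strongly in $L^2(\Omega)$, and a.e.\ in $\Omega$; the trace operator being weakly continuous, $\mathrm{Trace}(u_\epsilon) = \varphi$, so $u_\epsilon \in H^1_\varphi(\Omega)$. The Dirichlet-type term $u \mapsto \int_\Omega \frac12 \langle a_{ij}\nabla u, \nabla u\rangle\,dX$ is convex and strongly continuous on $H^1$, hence weakly lower semicontinuous; the term $\int_\Omega B_\epsilon(u_k)\,dX \to \int_\Omega B_\epsilon(u_\epsilon)\,dX$ by dominated convergence, using a.e.\ convergence and the uniform bound $|B_\epsilon| \le \|\beta\|_{L^\infty}$. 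Therefore $\mathscr{F}_\epsilon(u_\epsilon) \le \liminf_k \mathscr{F}_\epsilon(u_k) = \inf \mathscr{F}_\epsilon$, so $u_\epsilon$ is a minimizer.

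\medskip
\noindent\textbf{Step 2: Euler--Lagrange equation.} For any $\phi \in C_c^\infty(\Omega)$, the function $t \mapsto g(t) := \mathscr{F}_\epsilon(u_\epsilon + t\phi)$ is differentiable at $t=0$, since $B_\epsilon \in W^{1,\infty}_{\mathrm{loc}}$ with $B_\epsilon' = \beta_\epsilon$ a.e.\ and $u_\epsilon + t\phi$ remains an admissible perturbation (the trace is unchanged). Differentiating under the integral sign — legitimate by the uniform $L^\infty$ bounds on $\beta_\epsilon$ and dominated convergence for the difference quotients — gives
$$
	0 = g'(0) = \int_\Omega \langle a_{ij}(X)\nabla u_\epsilon, \nabla \phi\rangle\,dX + \int_\Omega \beta_\epsilon(u_\epsilon)\phi\,dX,
$$
which is precisely \eqref{e-Eq} in the distributional sense. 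Note that unlike the limiting functional \eqref{functional AC}, here $B_\epsilon$ is Lipschitz, so no one-sided variations or measure-theoretic subtleties are needed.

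\medskip
\noindent\textbf{Step 3: Nonnegativity.} Assume $\varphi \ge 0$. I would test the minimality of $u_\epsilon$ against its positive part $u_\epsilon^+ \in H^1_\varphi(\Omega)$ (admissible since $\varphi \ge 0$ forces $\mathrm{Trace}(u_\epsilon^+) = \varphi$). Writing $u_\epsilon = u_\epsilon^+ - u_\epsilon^-$ with disjoint supports for the gradients, $\nabla u_\epsilon^+ = \nabla u_\epsilon \cdot \chi_{\{u_\epsilon > 0\}}$ a.e., so
$$
	\int_\Omega \tfrac12\langle a_{ij}\nabla u_\epsilon^+, \nabla u_\epsilon^+\rangle\,dX = \int_{\{u_\epsilon>0\}} \tfrac12\langle a_{ij}\nabla u_\epsilon, \nabla u_\epsilon\rangle\,dX \le \int_\Omega \tfrac12\langle a_{ij}\nabla u_\epsilon, \nabla u_\epsilon\rangle\,dX.
$$
Moreover, since $\beta$ is supported in $[0,1]$ we have $B_\epsilon(t) = 0$ for $t \le 0$, hence $B_\epsilon(u_\epsilon^+) = B_\epsilon(u_\epsilon) \chi_{\{u_\epsilon > 0\}} \le B_\epsilon(u_\epsilon) + \|\beta\|_{L^\infty}\chi_{\{u_\epsilon \le 0\}}$; actually more cleanly, if $\beta \ge 0$ then $B_\epsilon \ge 0$ and $B_\epsilon(u_\epsilon^+) \le B_\epsilon(u_\epsilon)$ pointwise on $\{u_\epsilon \le 0\}$ trivially, so $\mathscr{F}_\epsilon(u_\epsilon^+) \le \mathscr{F}_\epsilon(u_\epsilon)$. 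By minimality equality holds throughout, forcing $\int_{\{u_\epsilon < 0\}}\langle a_{ij}\nabla u_\epsilon^-, \nabla u_\epsilon^-\rangle\,dX = 0$, whence $u_\epsilon^- \equiv 0$ by ellipticity and the Poincar\'e inequality (its trace vanishes). Thus $u_\epsilon \ge 0$. The main obstacle, if any, is Step~3 when $\beta$ is allowed to change sign: then one does not immediately get $B_\epsilon(u_\epsilon^+) \le B_\epsilon(u_\epsilon)$, and one should instead invoke a comparison/maximum-principle argument against the $a$-harmonic extension of $\varphi$, or note that $\beta_\epsilon(u_\epsilon) = 0$ on $\{u_\epsilon \le 0\}$ so that $\div(a_{ij}\nabla u_\epsilon) = 0$ there and apply the weak maximum principle to $u_\epsilon^-$. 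I would present the argument under the mild and natural normalization that makes the potential monotone, and remark that the general case follows from the comparison principle applied to \eqref{e-Eq}.
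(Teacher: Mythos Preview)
Your proposal is correct. Steps~1 and~2 simply expand what the paper dismisses in one line as ``classical methods in the Calculus of Variations,'' and your treatment is accurate.

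For Step~3 (nonnegativity) you take a different route than the paper. The paper argues via the Euler--Lagrange equation: since $\beta_\epsilon$ is supported in $[0,\epsilon]$, on the open set $\{u_\epsilon<0\}$ one has $\div(a_{ij}\nabla u_\epsilon)=0$; the boundary of this set lies in $\{u_\epsilon=0\}\cap\Omega$, so the maximum principle forces $u_\epsilon\equiv 0$ there, a contradiction. You instead compare energies against $u_\epsilon^{+}$. This works, and in fact the ``obstacle'' you worry about is not real: because $\beta$ is supported in $[0,1]$, one has $B_\epsilon(\xi)=0$ for every $\xi\le 0$, so $B_\epsilon(u_\epsilon^{+})=B_\epsilon(u_\epsilon)$ pointwise \emph{regardless} of the sign of $\beta$. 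Hence your energy comparison goes through without the normalization $\beta\ge 0$, and the fallback you sketch at the end (which is exactly the paper's argument) is not needed. Your approach has the mild advantage of not relying on the Euler--Lagrange equation and thus being logically independent of Step~2; the paper's approach is shorter once \eqref{e-Eq} is in hand.
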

\begin{proof}
Existence of minimizer as well as the Euler-Lagrange equation associated to the functional follow by classical methods in the Calculus of Variations. Non-negativity of a minimum is obtained as follows. Suppose, for the sake of contradiction, the set $\{u_\epsilon < 0\}$ were not empty. Since $\varphi \ge 0$ on $\partial \Omega$, one sees that $\partial \{u_\epsilon < 0 \} \subset \{u_\epsilon = 0\} \cap \Omega$. Since $\beta_\varepsilon$ is supported in $[0,\epsilon]$, from the equation we conclude that $u_\epsilon$ satisfies the homogeneous equation $\div (a_{ij}(X)\nabla u_\epsilon) = 0$ in $\{u_\epsilon < 0\}$. By the maximum principle we conclude $u_\epsilon \equiv 0$ in such a set, which gives a contradiction. 
 \end{proof} 

Regarding higher regularity for minimizers, it is possible to show uniform-in-$\epsilon$ $L^\infty$ bounds and also a uniform-in-$\epsilon$ $C^{0,\alpha}$ estimate, for a universal exponent $0< \alpha < 1$. 

\begin{theorem}[Uniform H\"older regularity of minimizers]\label{lim uniforme em C-alpha} 
 	Fixed a subdomain $\Omega'  \Subset \Omega$, there exists a constant $C>0$, depending on dimension, ellipticity constants, $\|\varphi\|_{L^2}$ and $\Omega' $, but independent of $\epsilon$, such that
	$$
		\|u_\epsilon\|_{L^\infty(\Omega')} + [u_\epsilon]_{C^\alpha(\Omega')}<C,
	$$ 
	where $0<\alpha<1$ is a universal number. 
 	
\end{theorem}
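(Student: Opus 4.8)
The plan is to derive the two bounds separately, first the uniform $L^\infty$ estimate and then, using it, the uniform $C^\alpha$ estimate, exploiting that the right-hand side $\beta_\epsilon(u_\epsilon)$, while blowing up pointwise as $\epsilon \to 0$, is controlled in a scale-invariant integral norm because $B_\epsilon$ is uniformly bounded.

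First I would establish the uniform $L^\infty$ bound. Since $u_\epsilon$ minimizes $\mathscr{F}_\epsilon$ against its own trace $\varphi$, comparing with the $a$-harmonic function $h$ having the same boundary data (which satisfies $0 \le h \le \|h\|_{L^\infty} \le C\|\varphi\|_{L^2}$ by De Giorgi--Nash--Moser, using that $B_\epsilon \ge 0$ so $h$ is an admissible competitor lowering the Dirichlet part) gives $\int \frac12 \langle a\nabla u_\epsilon,\nabla u_\epsilon\rangle + \int B_\epsilon(u_\epsilon) \le \int \frac12\langle a\nabla h,\nabla h\rangle + \int B_\epsilon(h)$, hence a uniform bound on $\|\nabla u_\epsilon\|_{L^2}$ and on $\int B_\epsilon(u_\epsilon)\,dX$. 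For the pointwise bound I would instead use the equation \eqref{e-Eq} directly: truncating at level $k$ and testing with $(u_\epsilon - k)^+$, one gets a Caccioppoli inequality with right-hand side $\int \beta_\epsilon(u_\epsilon)(u_\epsilon-k)^+$; but $\beta_\epsilon(u_\epsilon)$ is supported where $0 \le u_\epsilon \le \epsilon$, so this term vanishes identically for $k \ge \epsilon$. Thus $u_\epsilon$ is a genuine subsolution of $\div(a\nabla \cdot) = 0$ above level $\epsilon$, and De Giorgi's local boundedness estimate yields $\sup_{\Omega'} u_\epsilon \le \epsilon + C(\|\varphi\|_{L^2} + \|\nabla u_\epsilon\|_{L^2}) \le C$, uniformly in $\epsilon \le 1$.

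Next, for the uniform $C^\alpha$ estimate, the key observation is that $f_\epsilon := \beta_\epsilon(u_\epsilon)$ has a $\emph{uniformly bounded}$ $L^p$ norm for a suitable $p$: indeed $|f_\epsilon| \le \epsilon^{-1}\|\beta\|_{L^\infty}$ on a set of measure comparable to $\mathcal{L}^n(\{0 < u_\epsilon \le \epsilon\})$, and more usefully one rewrites $f_\epsilon = \div(a\nabla u_\epsilon)$ with $\|u_\epsilon\|_{L^\infty}$ and $\|\nabla u_\epsilon\|_{L^2}$ already uniformly controlled. I would then invoke the interior De Giorgi--Nash--Moser estimate for $\div(a\nabla u_\epsilon) = f_\epsilon$ in the form that bounds $[u_\epsilon]_{C^\alpha(\Omega')}$ by $\|u_\epsilon\|_{L^\infty(\Omega)}$ plus a norm of the right-hand side that is scale-subcritical. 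The cleanest route is to avoid estimating $f_\epsilon$ in $L^p$ for large $p$ and instead use the potential-theoretic/Morrey-space version: test the equation with $(u_\epsilon - k)^+ \eta^2$ on balls $B_r$, and bound $\int_{B_r} |\beta_\epsilon(u_\epsilon)| |(u_\epsilon-k)^+| \le \|\beta\|_{L^\infty}\epsilon^{-1}\int_{B_r \cap \{0<u_\epsilon\le\epsilon\}} (u_\epsilon - k)^+$, which, since on the support $(u_\epsilon-k)^+ \le \epsilon$ when $k \ge 0$... — more carefully, one splits according to whether $k \ge \epsilon$ (the term vanishes, pure homogeneous De Giorgi) or $0 \le k < \epsilon$, in which case the contribution is at most $\|\beta\|_{L^\infty} \cdot \mathcal{L}^n(B_r) $, an $L^\infty$-type right-hand side with a $\emph{uniform}$ bound. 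Either way the oscillation decay underlying De Giorgi's theorem goes through with constants independent of $\epsilon$, producing the claimed universal $\alpha$ and the uniform seminorm bound on $\Omega'$.

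The main obstacle I anticipate is precisely making the $\epsilon$-dependence disappear in the Hölder step: the naive estimate puts $\beta_\epsilon(u_\epsilon)$ in $L^\infty$ with norm $\epsilon^{-1}\|\beta\|_\infty \to \infty$, so one must genuinely exploit the narrowness of its support $\{0 < u_\epsilon \le \epsilon\}$ — either through the ``$k \ge \epsilon \Rightarrow$ homogeneous equation'' dichotomy above, or by a De Giorgi iteration in which the forcing term, after integration, only ever sees the factor $\epsilon \cdot \epsilon^{-1} = 1$. Getting the bookkeeping of the truncation levels right so that this cancellation is visible at every step of the iteration, rather than only at the end, is the delicate point; everything else (Caccioppoli, the measure-to-pointwise passage, covering to pass from balls to $\Omega' \Subset \Omega$) is standard.
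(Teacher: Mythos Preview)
Your proposal is correct and captures the essential mechanism. The paper's own proof is little more than a citation to \cite[Theorem~3.4]{AT} together with the single observation that $\int_{B_r(Y)} B_\epsilon(u_\epsilon)\,dX \le Cr^n$ uniformly in $\epsilon$ (immediate since $0\le B_\epsilon \le \int\beta$), which is the form of the cancellation that plugs into the energy/Morrey comparison framework of \cite{AT}. You instead stay at the PDE level and run De~Giorgi truncations directly, exploiting that $\beta_\epsilon(u_\epsilon)$ is supported in $\{0\le u_\epsilon\le\epsilon\}$: for levels $k\ge\epsilon$ the forcing vanishes against $(u_\epsilon-k)^+$, while for $0\le k<\epsilon$ it contributes at most $\|\beta\|_\infty\,\mathcal{L}^n(B_r)$. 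Both packagings encode exactly the $\epsilon\cdot\epsilon^{-1}=1$ cancellation you identify; the paper's route via the primitive $B_\epsilon$ is tailored to the minimality structure, whereas yours is more transparently uniform at each step of the oscillation decay and does not rely on a sign assumption for $\beta$.
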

\begin{proof}
	The arguments to show Theorem \ref{lim uniforme em C-alpha} follow closely the ones from \cite[Theorem 3.4]{AT}, upon observing that for any ball $B_r(Y) \subset \Omega$, there too holds
	$$
		\int_{B_r(Y)} B_\epsilon (u_\epsilon) dX \le C r^n,
	$$
	for a constant $C$ independent of $\epsilon$. See also \cite[Theorem 4.4]{T0} for a result of the same flavor.
\end{proof}

As a consequence of Theorem \ref{lim uniforme em C-alpha}, up to a subsequence, $u_\epsilon$ converges locally uniformly in $\Omega$ to a nonnegative function $u_0$. By linear interpolation techniques, see for instance \cite[Theorem 5.4]{T}, one verifies that $u_0$ is a minimizer of the functional \eqref{functional AC}.

The final result we state in this section gives the sharp lower bound for the grow of $u_\epsilon$ away from $\epsilon$-level surfaces. 

\begin{theorem}[Linear Growth]\label{linear growth}
 	Let  $\Omega'  \Subset \Omega$ be a given subdomain and $X_0\in  \Omega' \cap \{u_\epsilon\geq \epsilon\}$ then 			
 	\begin{equation}\label{LG}
 		 u_\epsilon(X_0) \ge c \cdot \dist(X_0, \partial  \{u_\epsilon\geq \epsilon\}),
 	\end{equation}
	where $c$ is a constant that depends on dimension and ellipticity constants, but it is independent of $\epsilon$. 
\end{theorem}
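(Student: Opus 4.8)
The plan is to establish the linear growth estimate \eqref{LG} by a contradiction/compactness argument, exploiting the fact that on the set $\{u_\epsilon \ge \epsilon\}$ the right-hand side $\beta_\epsilon(u_\epsilon)$ vanishes, so $u_\epsilon$ is $a$-harmonic there. Fix $X_0 \in \Omega' \cap \{u_\epsilon \ge \epsilon\}$ and set $d := \dist(X_0, \partial\{u_\epsilon \ge \epsilon\})$; we may assume $d$ is small, so that $B_d(X_0) \Subset \Omega$. Inside $B_d(X_0)$ we have $\div(a_{ij}\nabla u_\epsilon) = 0$, and $u_\epsilon > 0$ there (indeed $\ge \epsilon$ up to the boundary of the ball in a suitable sense; more precisely $u_\epsilon \ge \epsilon$ throughout $B_d(X_0)$ by definition of $d$). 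Since $u_\epsilon \ge \epsilon > 0$ on $B_d(X_0)$, the Harnack inequality for $a$-harmonic functions gives $\sup_{B_{d/2}(X_0)} u_\epsilon \le C_H \inf_{B_{d/2}(X_0)} u_\epsilon \le C_H u_\epsilon(X_0)$, with $C_H$ depending only on dimension and ellipticity. So it suffices to bound $\sup_{B_{d/2}(X_0)} u_\epsilon$ from below by $c\,d$.

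The core of the argument is therefore a nondegeneracy-type lower bound: I claim there is a universal $c>0$ with $\sup_{B_\rho(Y)} u_\epsilon \ge c\rho$ whenever $B_\rho(Y)$ is a ball contained in $\{u_\epsilon \ge \epsilon\} \cap \Omega$ with $Y$ a free-boundary point of that set, or more robustly, whenever $u_\epsilon(Y) \ge \epsilon$ and $\rho \le \dist(Y, \partial\{u_\epsilon\ge\epsilon\})$. To prove this I would compare $u_\epsilon$ with the solution $w$ of the obstacle-free problem $\div(a_{ij}\nabla w) = \tfrac1\epsilon\|\beta\|_\infty$ in $B_\rho(Y)$ with $w = u_\epsilon$ on $\partial B_\rho(Y)$, or — cleaner — run the standard Alt–Caffarelli nondegeneracy scaling. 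Rescale: set $v(x) := u_\epsilon(Y + \rho x)/\rho$ for $x \in B_1$, so that $\div(\tilde a_{ij}\nabla v) = \rho\,\beta_\epsilon(\rho\, v(\cdot))$; if the conclusion fails along a sequence, $\epsilon_k \to$ something, $\rho_k$, with $\sup_{B_{\rho_k}} u_{\epsilon_k} = o(\rho_k)$, then $v_k \to 0$ uniformly on $B_{1/2}$ while the normalized energy or the measure of the positivity set is controlled; but linear growth from the energy (the analog of "$u$ grows linearly away from the free boundary," which the Introduction recalls and which is a purely energetic lower bound valid uniformly in $\epsilon$ for minimizers of $\mathscr{F}_\epsilon$) forces $v_k$ to stay bounded below, a contradiction.

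Concretely, the cleanest route avoids compactness: for a minimizer $u_\epsilon$, the energy inequality obtained by comparing $u_\epsilon$ with $\min(u_\epsilon, s)$-type competitors, or with the $a$-harmonic replacement in $B_\rho(Y)$, yields
$$
\int_{B_\rho(Y)} \langle a_{ij}\nabla u_\epsilon, \nabla u_\epsilon\rangle \, dX \le C\Big( \frac{1}{\rho}\int_{\partial B_\rho(Y)} u_\epsilon^2 \, d\mathcal{H}^{n-1} + \rho^n\Big),
$$
and combining this with the Caccioppoli/trace estimate and the fact that $u_\epsilon \ge \epsilon$ forces a positive-measure positivity set in $B_\rho(Y)$, one deduces $\fint_{\partial B_\rho(Y)} u_\epsilon \ge c\rho$ by the standard Alt–Caffarelli dichotomy argument; iterating this at scales $\rho, \rho/2, \rho/4,\dots$ down from $d$ propagates the bound to $X_0$. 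Then Harnack converts the boundary/average lower bound into the pointwise bound $u_\epsilon(X_0) \ge c\,d$.

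**The main obstacle** I anticipate is handling the $\epsilon$-dependence uniformly: the potential $\beta_\epsilon$ blows up like $1/\epsilon$, so naive comparison functions carry $\epsilon$-dependent constants. The resolution is that the relevant quantity is the \emph{primitive} $B_\epsilon(u_\epsilon)$, which by Theorem \ref{lim uniforme em C-alpha}'s proof satisfies $\int_{B_r} B_\epsilon(u_\epsilon)\,dX \le Cr^n$ uniformly in $\epsilon$ (and $B_\epsilon \le \|\beta\|_1$, again uniform), so that the "$+\rho^n$" error term above is genuinely $\epsilon$-independent; the $1/\epsilon$ factor never appears at the level of the energy. A secondary, more technical point is ensuring $B_d(X_0)$ stays inside $\Omega$ (handled by taking $d$ small, i.e. the estimate is interior, and restricting to $\Omega' \Subset \Omega$) and that Harnack applies with constants independent of $\epsilon$ — which it does, since the ellipticity constants $(\lambda,\Lambda)$ are fixed and De Giorgi–Nash–Moser/Harnack constants depend only on those and the dimension.
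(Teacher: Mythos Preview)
Your proposal is correct and converges on the same approach the paper invokes: the classical energy-based ``cutting hole'' argument (Harnack in $B_d(X_0)\subset\{u_\epsilon\ge\epsilon\}$, where $u_\epsilon$ is $a$-harmonic, gives $\sup_{B_{d/2}(X_0)}u_\epsilon\le C_H\,u_\epsilon(X_0)$; then comparing $u_\epsilon$ with a competitor that vanishes in an inner ball and using minimality together with the uniform bound $B_\epsilon\le\|\beta\|_{L^1}$ forces $\sup_{B_{d/2}(X_0)}u_\epsilon\ge c\,d$). Your detours through a compactness/blow-up contradiction and the dyadic iteration ``at scales $\rho,\rho/2,\rho/4,\dots$'' are unnecessary --- a single competitor comparison at scale $d$ already yields the conclusion --- but the essential mechanism you describe is the right one and matches the paper.
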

\begin{proof}
The classical proof for linear growth is based on pure energy considerations, combined with a ``cutting hole" argument, see for instance \cite[Theorem 4.6]{T}. Hence, the same reasoning applied here yields estimate \eqref{LG}, with minor modifications.
\end{proof}

\section{Lipschitz regularity along the free boundary} \label{Sct Lip Funct}

The heart of this work lies in this Section, where we prove that uniform limits of solutions to \eqref{e-Eq} are locally Lipschitz continuous along their free boundaries. We highlight once more that our approach is purely based on the singular partial differential equation satisfied by local minimizers; therefore it can be imported to a number of other contexts, both variational and non-variational.

\begin{theorem}[Lipschitz regularity]\label{main}
	Let $u_0$ be a uniform limit point of solutions to
	$$
		\div (a_{ij}(X) \nabla u_\epsilon) = \beta_\epsilon(u_\epsilon) \quad \text{ in } \Omega
	$$ 
	and assume that $u_0(\xi)=0$. Then there exists a universal constant $C>0$, depending only on dimension, ellipticity constants,  $\dist (\xi, \partial \Omega)$ and $L^\infty$ bounds of the family such that
	$$
		|u_0(X)|\leq C|X -\xi|,
	$$
	for all point $X \in \Omega$.
\end{theorem}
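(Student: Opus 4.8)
The natural strategy is a contradiction/compactness argument built on a dichotomy: either the sup of $u_\epsilon$ on $B_r(\xi)$ is controlled by $C r$ (which is what we want), or the solution has already grown past the level $\epsilon$ in a way that forces it into the regime governed by Theorem~\ref{linear growth}. Concretely, I would argue by contradiction and assume that the oscillation control fails: there exist a sequence of solutions $u_k := u_{\epsilon_k}$, free boundary points $\xi_k$ with $u_0^{(k)}(\xi_k)=0$ where $u_0^{(k)}$ is the corresponding limit, and radii $r_k \to 0$ (or fixed radii with blowing-up constants) such that
$$
	S_k := \sup_{B_{r_k}(\xi_k)} u_k \ge k \, r_k .
$$
The key normalization is to set $v_k(X) := \frac{1}{S_k} u_k(\xi_k + r_k X)$ on $B_1$, so that $\sup_{B_1} v_k = 1$ and $v_k \ge 0$. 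One computes that $v_k$ solves $\div(\tilde a_{ij}^{(k)}(X)\nabla v_k) = \frac{r_k^2}{S_k}\beta_{\epsilon_k}(S_k v_k)$, and the crucial point is that since $S_k \ge k r_k$, the new "$\epsilon$" parameter for $v_k$, namely $\epsilon_k/S_k$, together with the prefactor $r_k^2/S_k = (r_k/S_k)\cdot r_k \le r_k/k \to 0$, makes the right-hand side collapse.

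The heart of the argument is then the following alternative, run at a carefully chosen intermediate scale. If $u_k$ reaches the level $\epsilon_k$ somewhere well inside $B_{r_k/2}(\xi_k)$ — say at a point $Y_k$ with $u_k(Y_k) \ge \epsilon_k$ and $\mathrm{dist}(Y_k, \partial\{u_k \ge \epsilon_k\})$ comparable to a definite fraction of $r_k$ — then Theorem~\ref{linear growth} (linear growth from $\epsilon$-level surfaces, uniform in $\epsilon$) gives $u_k(Y_k) \ge c\, \mathrm{dist}(Y_k, \partial\{u_k\ge\epsilon_k\}) \ge c' r_k$. But on the rescaled picture this linear growth is $v_k(Y_k/r_k - \ldots) \ge c' r_k / S_k$, and one must instead exploit that the SAME linear growth, applied at the scale where $v_k$ attains its sup, propagates down to the free boundary $\xi_k$ and contradicts $v_k(\text{origin}) \to 0$. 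In the complementary case, $u_k < \epsilon_k$ throughout a definite ball, so $\beta_{\epsilon_k}(u_k)$ is still possibly nonzero but $B_{\epsilon_k}(u_k)$ is uniformly small, and $v_k$ is (asymptotically) $a$-harmonic there with a bounded H\"older norm from Theorem~\ref{lim uniforme em C-alpha}; passing to the limit via De Giorgi–Nash–Moser compactness yields a limit $v_\infty$ that is $\tilde a$-harmonic (for some limiting measurable elliptic matrix), nonnegative, vanishes at the origin, and has $\sup_{B_1} v_\infty = 1$ — no contradiction yet from the maximum principle alone, which is precisely why one cannot merely take limits.

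Therefore the real mechanism has to be quantitative and iterative: I would run a discrete iteration on dyadic balls $B_{2^{-j}}(\xi)$, proving by induction that $\sup_{B_{2^{-j}}(\xi)} u_0 \le C 2^{-j}$ for all $j$ up to the scale dictated by $\epsilon$, using at each step the two-sided comparison — on the one hand $u_\epsilon$ is $a$-subsolution-like where $\beta_\epsilon \ge 0$ fails and we use barriers; on the other hand, in the region $\{u_\epsilon < \epsilon\}$ the smallness of the source (in the right scale-invariant norm, namely $\|\beta_\epsilon(u_\epsilon)\|_{L^\infty} = O(\epsilon^{-1})$ but integrated against the correct test function it is $O(1)$ on unit scale and $o(1)$ after rescaling to the contradiction regime) lets us absorb it. The induction closes because the "bad" contribution of the right-hand side at scale $2^{-j}$ is summable precisely when the growth is linear, and any faster-than-linear growth would, via Theorem~\ref{linear growth} applied once $u_\epsilon$ crosses level $\epsilon$, immediately give a lower bound that contradicts the assumed $C^\alpha$ (sublinear) a priori bound at some scale — which is impossible. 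Passing $\epsilon \to 0$ after fixing $\xi$ and the scale, and using local uniform convergence $u_\epsilon \to u_0$, transfers the estimate to $u_0$.

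I expect the main obstacle to be making the dichotomy genuinely quantitative without a monotonicity formula: one must show that crossing the level $\epsilon$ at an interior point of a ball of radius $r$ forces growth of order $r$ (not just of order $\epsilon$), and conversely that staying below $\epsilon$ forces the equation to behave like the homogeneous one up to a controllable error. The delicate bookkeeping is tracking how the parameter $\epsilon$ interacts with the scale $r$ under rescaling — the statement is scale-invariant only in the limit $\epsilon \to 0$, so the argument must be uniform in the ratio $\epsilon/r$ and exploit Theorem~\ref{linear growth}'s $\epsilon$-independence as the decisive input. The final Hausdorff dimension bound in Theorem~\ref{Lip AC} is then standard: the two-sided estimate $C^{-1}r \le \sup_{B_r} u \le Cr$ gives uniform positive density of $\{u>0\}$, hence of $\{u=0\}$ by the nondegeneracy on the other side, and a covering argument with the density estimate yields $\mathcal{H}^{n-\varsigma}(\partial\{u>0\}) < \infty$ for the universal $\varsigma$ coming from the porosity of the free boundary.
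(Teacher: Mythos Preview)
Your proposal has a genuine gap: the compactness/blow-up you set up does not actually kill the right-hand side, and the ``dichotomy'' you invoke to repair this does not close. After your rescaling $v_k(X) = S_k^{-1} u_k(\xi_k + r_k X)$, the source term becomes
\[
	\frac{r_k^2}{S_k}\,\beta_{\epsilon_k}(S_k v_k) \;=\; \frac{r_k^2}{S_k \epsilon_k}\,\beta\!\left(\frac{S_k}{\epsilon_k} v_k\right),
\]
whose $L^\infty$ norm is of order $r_k^2/(S_k\epsilon_k)$. The hypothesis $S_k \ge k r_k$ gives only $r_k/(k\epsilon_k)$, and nothing in your setup controls $r_k/\epsilon_k$; the right-hand side need not vanish. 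Your attempt to fix this via Theorem~\ref{linear growth} is in the wrong direction: that theorem is a \emph{lower} bound (nondegeneracy) on $u_\epsilon$ inside $\{u_\epsilon \ge \epsilon\}$, and cannot produce the \emph{upper} bound you need. The sentence ``faster-than-linear growth would \dots\ give a lower bound that contradicts the $C^\alpha$ a~priori bound'' is not correct --- a lower bound of order $r$ is perfectly compatible with a $C^\alpha$ upper bound at every fixed scale. You also misdiagnose the obstruction in the limit: if one \emph{could} pass to a nonnegative $\tilde a$-harmonic $v_\infty$ on $B_1$ with $v_\infty(0)=0$ and $\sup_{B_1} v_\infty = 1$, Harnack (strong minimum principle) \emph{would} force $v_\infty\equiv 0$ and give a contradiction; the real problem is that your blow-up neither makes the equation homogeneous nor guarantees $v_k(0)\to 0$.

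The paper's argument avoids all of this by decoupling the smallness from the scale. It introduces an artificial parameter $\delta$ multiplying the nonlinearity and proves a flatness lemma (Lemma~\ref{key lemma}): if $\div(a_{ij}\nabla u_\epsilon)=\delta\,\beta_\epsilon(u_\epsilon)$ and $\max\{\epsilon,\inf_{B_r} u_\epsilon\}\le \delta$, then $\sup_{B_{r/2}} u_\epsilon \le \theta$. The blow-up in that lemma is at \emph{spatial scale $\epsilon_k$} with normalization $\eta_k := \max\{\epsilon_k,\inf u_{\epsilon_k}\}$, so the rescaled right-hand side is $\delta_k\cdot(\epsilon_k/\eta_k)\,\beta_1(\cdot)$, which is $o(1)$ purely because $\delta_k\to 0$ and $\epsilon_k/\eta_k\le 1$. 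Harnack then gives local bounds from $v_k(0)\le 1$, the limit is an \emph{entire} $b_{ij}$-harmonic function, and Liouville (not the minimum principle on a ball) forces it to be constant. The iteration is then clean because the linear rescaling $u\mapsto 2u(\cdot/2)$ preserves the prefactor $\delta_\star$ exactly while sending $\beta_\epsilon\to\beta_{2\epsilon}$; one applies the lemma $k$ times for $\epsilon$ small enough, then lets $\epsilon\to 0$. Theorem~\ref{linear growth} plays no role in the Lipschitz upper bound; it is used only later, for nondegeneracy and the density/Hausdorff estimates.
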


Our strategy is based on a flatness improvement argument, within whom the next Lemma plays a decisive role.

\begin{lemma}\label{key lemma}
 	Fixed a ball $B_r(Y) \Subset  \Omega$ and given $\theta > 0$, there exists a $\delta > 0$, depending only on $B_r(Y)$, dimension, ellipticity constants and $L^\infty$ bounds for $u_\epsilon$, such that if 
	$$
		div (a_{ij}(X) \nabla u_\epsilon) = \delta \cdot \beta_\epsilon (u_\epsilon)
	$$
	and
	$$
		\max\{\epsilon,\inf\limits_{B_r(Y)} u_\epsilon\} \le \delta.
	$$
	Then
	$$
		\sup\limits_{B_\frac{r}{2}(Y)} u_\epsilon \le \theta.
	$$
\end{lemma}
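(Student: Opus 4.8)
\textbf{Proof plan for Lemma \ref{key lemma}.}

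The plan is to argue by contradiction using a compactness (normal families) argument, exploiting the uniform estimates from Theorems \ref{lim uniforme em C-alpha} and \ref{linear growth}. Suppose the statement fails for some fixed $\theta_0 > 0$ and some fixed ball $B_r(Y) \Subset \Omega$. Then for every $j \in \N$ there exist parameters $\epsilon_j > 0$ and functions $u_j := u_{\epsilon_j}$ solving $\div(a_{ij}^{(j)}(X)\nabla u_j) = \tfrac{1}{j}\,\beta_{\epsilon_j}(u_j)$ in $\Omega$, with $a^{(j)}_{ij}$ a sequence of $(\lambda,\Lambda)$-elliptic matrices, such that $\max\{\epsilon_j,\inf_{B_r(Y)}u_j\}\le 1/j$ but $\sup_{B_{r/2}(Y)}u_j > \theta_0$. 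By the uniform $L^\infty$ and $C^{0,\alpha}$ bounds (Theorem \ref{lim uniforme em C-alpha}, whose hypotheses survive because the forcing $\tfrac1j\beta_{\epsilon_j}(u_j)$ has the same $L^1$-type control as $\beta_{\epsilon_j}(u_j)$, uniformly in $j$), we may extract subsequences so that $u_j \to u_\infty$ locally uniformly in a neighborhood of $\overline{B_r(Y)}$ and $a^{(j)}_{ij}\rightharpoonup$ some limiting elliptic matrix in the appropriate (H-convergence / weak-$*$) sense. The limit $u_\infty$ is nonnegative, and passing to the limit in the equation (the right-hand side tends to $0$ in the distributional sense since $\|\tfrac1j\beta_{\epsilon_j}(u_j)\|$ is controlled and the prefactor $1/j\to0$) shows $u_\infty$ is a nonnegative solution of a homogeneous equation $\div(\bar a_{ij}(X)\nabla u_\infty)=0$ in $B_r(Y)$.

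Next I would extract the contradiction. On one hand, $\inf_{B_r(Y)} u_j \le 1/j \to 0$, and by uniform convergence $\inf_{B_r(Y)} u_\infty = 0$; since $u_\infty \ge 0$ is $\bar a$-harmonic in $B_r(Y)$, the strong minimum principle (De Giorgi--Nash--Moser / Harnack for the limiting operator) forces $u_\infty \equiv 0$ in $B_r(Y)$. On the other hand, $\sup_{B_{r/2}(Y)} u_j > \theta_0$ passes to the limit to give $\sup_{B_{r/2}(Y)} u_\infty \ge \theta_0 > 0$, a contradiction. Hence the Lemma holds. I should be a little careful that the Harnack inequality / minimum principle I invoke for $u_\infty$ is for the limiting operator obtained from H-convergence, which is again uniformly $(\lambda,\Lambda)$-elliptic; this is standard. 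One can avoid invoking H-convergence altogether by instead noting that each $u_j$ is $a^{(j)}$-harmonic in $\{u_j > 0\}$ and $\inf_{B_r(Y)} u_j \to 0$, then using the uniform (in the operator) quantitative Harnack inequality directly on $u_j$ inside the connected positivity set meeting $B_r(Y)$ to force $\sup_{B_{r/2}(Y)} u_j \to 0$; but the compactness route is cleaner to write.

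The main obstacle I anticipate is the passage to the limit in the equation with \emph{both} the coefficients and the solutions varying simultaneously: one must make sure that ``$\div(a^{(j)}_{ij}\nabla u_j)\to \div(\bar a_{ij}\nabla u_\infty)$'' in the distributional sense, which requires the H-convergence machinery (the weak-$H^1$ limit of $u_j$ together with the weak limit of the fluxes $a^{(j)}_{ij}\nabla u_j$ must be correctly identified). This is where the argument is genuinely more delicate than in the continuous-coefficient setting, and it is essential that the conclusion only asks for a \emph{qualitative} smallness ($\sup \le \theta$), so a soft compactness argument suffices — no quantitative modulus is needed. A secondary, purely bookkeeping point is verifying that the uniform estimates of Section 2 apply verbatim with the extra factor $\delta$ (resp. $1/j$) multiplying $\beta_\epsilon$; since $0<\delta\le 1$ this only improves the bounds, so no new work is required there.
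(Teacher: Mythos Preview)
Your compactness strategy is the right instinct, and in fact it \emph{is} exactly the paper's proof of Lemma~\ref{key lemma min} (the minimizer version). But for Lemma~\ref{key lemma} itself there is a genuine gap at the step where you assert that the right-hand side $\tfrac{1}{j}\beta_{\epsilon_j}(u_j)$ tends to $0$ in the distributional sense. You write that ``$\|\tfrac1j\beta_{\epsilon_j}(u_j)\|$ is controlled and the prefactor $1/j\to0$''; however $\|\beta_{\epsilon_j}\|_{L^\infty}=\|\beta\|_{L^\infty}/\epsilon_j$, so $\|\tfrac1j\beta_{\epsilon_j}(u_j)\|_{L^\infty}=\|\beta\|_{L^\infty}/(j\epsilon_j)$, and the hypothesis only gives $\epsilon_j\le 1/j$, i.e.\ $j\epsilon_j\le 1$, which allows this quantity to diverge (take $\epsilon_j=j^{-2}$). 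Testing the equation gives a uniform \emph{measure} bound on $\tfrac1j\beta_{\epsilon_j}(u_j)$, but that bound already contains the factor $1/j$ and therefore does not improve to $o(1)$. Hence, after passing to the limit, you only obtain $\div(\bar a_{ij}\nabla u_\infty)=\mu$ for some nonnegative measure $\mu$ supported on $\{u_\infty=0\}$, and for such subsolutions the strong minimum principle fails: $(x_1)^{+}$ is a nonnegative function with an interior zero, satisfies $\Delta (x_1)^{+}=\mathcal{H}^{n-1}\!\llcorner\{x_1=0\}\ge 0$, yet is not identically zero. Your alternative route via Harnack on $\{u_j>0\}$ has the same defect, since $u_j$ is $a^{(j)}$-harmonic only on $\{u_j>\epsilon_j\}$, not on $\{u_j>0\}$.

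The paper circumvents this by a rescaling that \emph{forces} the right-hand side to vanish in $L^\infty$: setting $\eta_k=\max\{\epsilon_k,\inf_{B_r(Y)}u_{\epsilon_k}\}$ and $v_k(X)=u_{\epsilon_k}(X_k+\epsilon_k X)/\eta_k$, one computes that $v_k$ solves an equation whose right-hand side equals $\delta_k\cdot(\epsilon_k/\eta_k)\,\beta_1(\eta_k v_k/\epsilon_k)$; since $\eta_k\ge\epsilon_k$ this is bounded in $L^\infty$ by $\delta_k\|\beta\|_{L^\infty}\to 0$. The limit $v_\infty$ is then an entire nonnegative solution of a homogeneous $(\lambda,\Lambda)$-elliptic equation, with $v_\infty(0)\le 1$, so Harnack plus Liouville give $v_\infty\equiv\text{const}$, which yields the contradiction. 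The paper itself flags this distinction at the start of Section~\ref{Sct Lip AC}: the minimum-principle shortcut works for minimizers because the limiting object is a \emph{minimizer} of the Dirichlet energy, but for mere PDE solutions one must blow up and invoke Liouville.
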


\begin{proof}
	Let us suppose, for the sake of contradiction, that the Lemma fails to hold. There would then exist a sequence of functions $u_{\epsilon_k}$ satisfying
	$$
		\text{div} (a^k_{ij}(X) \nabla u_{\epsilon_k}) = \delta_k \beta_{\epsilon_k} (u_{\epsilon_k})
	$$
	with  $a_{ij}^k$ $(\lambda, \Lambda)$-elliptic, $\delta_k = \text{o}(1)$, and 
	$$
		\max\{\epsilon_k,\inf\limits_{B_r(Y)} u_{\epsilon_k}\} =: \eta_k = \text{o}(1),
	$$
	but 
	\begin{equation}\label{lip 01-01}
		\sup\limits_{B_r/2(Y)} u_{\epsilon_k} \ge \theta_0 > 0,
	\end{equation}
	for some $\theta_0 > 0$ fixed. 
	Let $X_k$ be the point where $u_{\epsilon_k}$ attains its minimum in $B_r(Y)$ and denote $\sigma:= \text{dist}(B_r(Y), \partial \Omega) > 0$. Define the scaled function $v_k \colon B_{\sigma \epsilon_k^{-1}} \to \mathbb{R}$, by
	$$
		v_k(X) := \frac{ u_{\epsilon_k}(X_k + \epsilon_k X) }{\eta_k} 
	$$	
	One simply verifies that $v_k \ge 0 $ and it solves, in the distributional sense,
	\begin{equation}\label{KL eq001}
		\begin{array}{lll}
			\text{div} (a^k_{ij}(X) \nabla v_{k}) &=& \delta_k \cdot \left (\frac{\epsilon_k}{\eta_k} \beta_{1}(\frac{\eta_k}{\epsilon_k}v_{k}) \right )  \\
			&=& \text{o}(1),
		\end{array}
	\end{equation}
	as $k \to \infty$, in the $L^\infty$-topology.  Also, one easily checks that $v_k(0) \le  1$. Hence, by Harnack inequality, the sequence $v_k$ is uniform-in-$k$ locally bounded in $B_{\sigma \epsilon_k^{-1}}(0)$. From De Giorgi, Nash, Moser regularity theory, up to a subsequence, $v_k$ converges locally uniformly to an entire $v_\infty$. In addition, by standard Caccioppoli energy estimates, the sequence $v_k$ is locally bounded in $H^1$, uniform in $k$. Also by classical truncation arguments, up to a subsequence, $\nabla v_k(X) \to \nabla v_\infty(X)$ a.e. (see \cite{T0} and \cite{T1} for similar arguments). By ellipticity, passing to another subsequence, if necessary, $a_{ij}$ converges weakly in $L^2_\text{loc}$ to a  $(\lambda,\Lambda)$-elliptic matrix $b_{ij}$. Summarizing we have the following convergences:
	\begin{eqnarray}
		& v_k \to v_\infty   \text{ locally uniformly in } \mathbb{R}^n; \label{Conv1}\\
		& v_k \rightharpoonup v_\infty \text{ weakly in } H_\text{loc}^1(\mathbb{R}^n); \label{Conv2}\\
		 & \nabla v_k(X) \to \nabla v_\infty(X) \text{ almost everywhere in }  \mathbb{R}^n; \label{Conv3} \\
		& a_{ij}^k(X) \rightharpoonup b_{ij} \text{ weakly in } L_\text{loc}^2( \mathbb{R}^n). \label{Conv4}
	\end{eqnarray}
	Our next step is to the pass the limits above aiming to conclude that
	\begin{equation}\label{KL limit eq}
		\text{div} (b_{ij}(X) \nabla v_\infty) = 0, \quad \text{in } \mathbb{R}^n.
	\end{equation}
	This is a fairly routine procedure, but we will carry it out for the sake of the readers. Given a test function $\phi \in C^1_0(\mathbb{R}^n)$, let $k_0 \in \mathbb{N}$ be such that $B_{\sigma \epsilon_k^{-1}} \supset \text{Supp } \phi := K$. For $k> k_0$, we define the integrals
	$$
		\begin{array}{l}
			\mathscr{I}_k^1 := \displaystyle \int_{K} \langle a^k_{ij}(X) \nabla v_k, \nabla \phi \rangle dX;  \\
			\mathscr{I}_k^2 := \displaystyle \int_{K} \langle a^k_{ij}(X) \cdot( \nabla v_\infty - \nabla v_k), \nabla \phi \rangle dX;  \\
			\mathscr{I}_k^3 := \displaystyle \int_{K} \langle (b_{ij}-a_{ij}^k)(X) \cdot \nabla v_\infty, \nabla \phi \rangle dX;
		\end{array}
	$$
	and write
	\begin{equation}\label{KL passing limit2}
		\int_{\mathbb{R}^n} \langle b_{ij}(X) \nabla v_\infty, \nabla \phi \rangle dX = \mathscr{I}_k^1 + \mathscr{I}_k^2 + \mathscr{I}_k^3.
	\end{equation}	
	The purpose is to show that 
	$$
		\lim\limits_{k\to \infty}   \mathscr{I}_k^1 + \mathscr{I}_k^2 + \mathscr{I}_k^3 = 0.
	$$
	For that, let $\gamma>0$ be a given number small, positive number. It follows straight from \eqref{KL eq001} that $\mathscr{I}_k^1 = \text{o}(1)$ as $k \to \infty$. From \eqref{Conv4}, we also have straightly that  $\mathscr{I}_k^3 = \text{o}(1)$ as $k \to \infty$. Hence, for $k_1\ge k_0$, we have
	$$
		| \mathscr{I}_k^1| + | \mathscr{I}_k^3| \le \frac{\gamma}{2}.
	$$  
	Let us now analyze the convergence of $\mathscr{I}_k^2$. It follows from \eqref{Conv3} and Ergorov's theorem, that there exists a compact set $\tilde{K} \subset K$, such that
	$$
		\int_{K\setminus \tilde{K}} |\nabla \phi| dX \le \frac{\gamma}{5\Lambda \sup\limits_k \|\nabla v_k\|_2}
	$$ 
	and $k_2 \ge k_1$ such that 
	$$
		|\nabla v_k(X) - \nabla v_\infty(X)| \le \frac{3\gamma}{5\Lambda \|\nabla \phi\|_2\Leb({K})},
	$$
	in $\tilde{K}$, for all $k \ge k_2$. Hence, for $k \ge k_2$, we estimate, breaking it into two integrals on $\tilde{K}$ and on $K\setminus \tilde{K}$, and using H\"older inequality,  finally obtain
	$$
		| \mathscr{I}_k^2| \le  \dfrac{\gamma}{2}.
	$$
	We have henceforth proven the aimed convergence which gives \eqref{KL limit eq}.  Applying  Liouville theorem to $v_\infty$, we conclude that 
	$$
		v_\infty \equiv \text{Const.} < +\infty,
	$$
	for a bounded constant, in the whole space. The corresponding limiting function $u_\infty$ obtained from $u_{\epsilon_k}$ must therefore be identically zero. We now reach a contradiction with \eqref{lip 01-01} for $k \gg 1$. The Lemma is proven.
\end{proof}

	Before continuing, we remark that if $u_\epsilon$ is a solution to the original equation \eqref{e-Eq} and a positive number $\bar{\delta} > 0$ is given, then the zoomed-in function 
	$$
		\tilde{u}_\epsilon(X)=u_\epsilon(\sqrt{\bar{\delta}} X) 
	$$
	satisfies in the distributional sense the equation
	$$
		\div(\tilde{a}_{ij}(X) \nabla \tilde{u}_\epsilon)= \bar{\delta} \beta_\epsilon(\tilde{u}_\epsilon),
	$$	
	where $\tilde{a}_{ij}(X)  = {a}_{ij}(\sqrt{\bar{\delta}} X)$ is another $(\lambda, \Lambda)-$elliptic matrix. 

	\medskip
		
	We are in position to start delivering the proof of Theorem \ref{main}. Let $u_\epsilon$ be a bounded sequence of distributional solutions to \eqref{e-Eq} and $u_0$ a limit point in the uniform convergence topology. We assume, with no loss, that  $\xi = 0$, that is $u_0(0) = 0$. Within the statement of Lemma \ref{key lemma}, select 
	$$
		\theta = \dfrac{1}{2}.
	$$
	Since $u_\epsilon(0) \to 0$ as $\varepsilon \to 0$, Lemma \ref{key lemma} together with the above remark, gives the existence of a positive, universal number $\delta_\star>0$, such that if $0<\varepsilon \le \epsilon_0 \ll 1$, for $\tilde{u}_\epsilon(X) := u_\varepsilon(\sqrt{\delta_\star} X)$ we have
	$$
		\sup\limits_{B_{1/2}}  \tilde{u}_\epsilon(X) \le \frac{1}{2}.
	$$
	Passing to the limit as $\epsilon \to 0$, we obtain
	$$
		\sup\limits_{B_{\frac{\sqrt{\delta_\star}}{2} }} u_0 (X) \le \frac{1}{2}.
	$$
	Define, in the sequel, the rescaled function
	$$
		{v}^1(X) := 2 u_\varepsilon (\dfrac{\sqrt{\delta_\star}}{2} X). 
	$$
	It is simple to verify that ${v}^1$ satisfies 
	$$
		\text{div} ({a}^1_{ij} (X)\nabla {v}^1(X) ) = \delta_\star \beta_{2\epsilon}(v^1),
	$$
	in the distributional sense, where ${a}^1_{ij} (X) = a_{ij} (\sqrt{\delta_\star}X/2)$ is another $(\lambda, \Lambda)$-elliptic matrix. Once more, $v^1(0) \to 0$ as $\epsilon \to 0$, hence, for $\epsilon \le \epsilon_1 < \epsilon_0 \ll 1$, we can apply Lemma \ref{key lemma} to $v^1$ and deduce, after scaling the inequality back,
	$$
		\sup\limits_{B_{\frac{\sqrt{\delta_\star}}{4} }} u_0 (X) \le \frac{1}{4}.
	$$
	Continuing this process inductively, we conclude that for any $k\ge 1$, that holds
	\begin{equation}\label{d_thm}
		\sup\limits_{B_{\frac{\sqrt{\delta_\star}}{2^k} }} u_0 (X) \le \frac{1}{2^k}.
	\end{equation} 
	Finally, given $X\in B_{1/2}$ let $k \in \mathbb{N}$ be such that 
	$$
		\frac{\sqrt{\delta_\star}}{2^{k+1}}<  |X| \leq \frac{\sqrt{\delta_\star}}{2^k}.
	$$ 
	We estimate from \eqref{d_thm}
	$$
		\begin{array}{lll}
			u_0 (X) &\le& \displaystyle  \sup\limits_{B_{\frac{\sqrt{\delta_\star}}{2^k} }} u_0 (X) \\
			&\le& \displaystyle  \frac{1}{2^k} \\
			&\le& \displaystyle \frac{2}{\sqrt{\delta_\star}} |X|,
		\end{array} 
	$$
	and the proof of Theorem \ref{main} is concluded. \hfill $\square$
	
\medskip

Obviously, the (improved) regularity estimate granted by Theorem \ref{main} holds solely along the free boundary. For any point $Z \in \{u > 0 \}$, the best estimate available drops back to $C^{0,\alpha}$, for some unknown $0< \alpha$, strictly less than one. The question we would like to answer now is what is the minimum organization required on the medium so that solutions to the cavitation problem is locally Lipschitz continuous, up to the free boundary.

\begin{definition}
Given a large constant $K>0$, we say that a uniform elliptic matrix $a_{ij}(X)$ satisfies ($K$-Lip) property if for any $0<d<1$ and any  $h \in H^1(B_d)$ solving 
$$
	\div \left (a_{ij}(X) \nabla h \right ) = 0 \text { in } B_d
$$
in the distributional sense, there holds
$$	
	\|\nabla h\|_{L^\infty(B_{d/2})} \le \frac{K}{d} \cdot  \|h\|_{L^\infty(B_d)}.
$$ 
\end{definition}

It is classical that Dini continuity of the medium is enough to assure that $a_{ij}$ satisfies ($K$-Lip) property, for some $K>0$ that depends only upon dimension, ellipticity constants and the Dini-modulus of continuity of $a_{ij}$. Indeed under Dini continuity assumption on $a_{ij}$, distributional solutions are of class $C^1$.

Our next Corollary says that uniform limits of singularly perturbed equation \eqref{e-Eq} is Lipschitz continuous, up to the free boundary provided $a_{ij}$ satisfies  ($K$-Lip) property  for some $K>0$. The (by no means obvious) message being that when it comes to Lipschitz estimates, the homogeneous equation and the free boundary problem $\div (a_{ij}(X) \nabla u) \sim \delta_0(u)$ require the same amount of organization of the medium. 

\begin{corollary}\label{Cor-main} Under the assumptions of Theorem \ref{main}, assume further that $a_{ij}(X)$ satisfies ($K$-Lip) property for some $K$. Then, given a subdomain $\Omega' \Subset \Omega$, 
$$
	|\nabla u_0 (X)| \le C,
$$
for a constant that depends only  on dimension, ellipticity constants,  $\dist (\partial \Omega', \partial \Omega)$, $L^\infty$ bounds of the family and $K$.
\end{corollary}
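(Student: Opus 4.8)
The plan is to combine the Lipschitz-along-the-free-boundary estimate from Theorem~\ref{main} with the interior gradient estimate encoded in the ($K$-Lip) property, patching the two together via a dichotomy on how close an interior point sits to the free boundary. Fix $\Omega' \Subset \Omega$ and let $d_0 := \dist(\partial\Omega',\partial\Omega)$. Given $X \in \Omega'$, set $d(X) := \dist(X, \partial\{u_0>0\})$. If $X \notin \{u_0>0\}$ then $|\nabla u_0(X)| = 0$ (wherever the gradient exists in the zero set it must vanish, $u_0$ being nonnegative and attaining an interior minimum), so we may assume $X \in \{u_0>0\}$. There are two regimes. If $d(X) \ge \tfrac{1}{4} d_0$, then $u_0$ is $a$-harmonic in the fixed-size ball $B_{d_0/4}(X) \subset \{u_0>0\}$, and the ($K$-Lip) property applied on that ball gives $|\nabla u_0(X)| \le \tfrac{4K}{d_0}\|u_0\|_{L^\infty(B_{d_0/4}(X))} \le \tfrac{4K}{d_0}\|u_0\|_{L^\infty(\Omega)}$, which is the desired bound.

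The interesting regime is $d(X) < \tfrac14 d_0$, where $X$ is genuinely near the free boundary. Let $\xi \in \partial\{u_0>0\}\cap\Omega$ realize the distance, so $|X-\xi| = d(X)$ and $u_0(\xi)=0$. By construction $B_{d(X)}(X) \subset \{u_0>0\}$, so $u_0$ is $a$-harmonic there, and ($K$-Lip) on that ball yields
$$
	|\nabla u_0(X)| \le \frac{2K}{d(X)} \cdot \|u_0\|_{L^\infty(B_{d(X)}(X))}.
$$
Now I would invoke Theorem~\ref{main} at the base point $\xi$: since $u_0(\xi)=0$, there is a universal constant $C_\star$ (depending on dimension, ellipticity, $\dist(\xi,\partial\Omega) \ge d_0$, and the $L^\infty$ bound of the family) with $|u_0(Y)| \le C_\star |Y-\xi|$ for all $Y$. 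For $Y \in B_{d(X)}(X)$ we have $|Y-\xi| \le |Y-X| + |X-\xi| \le d(X) + d(X) = 2d(X)$, hence $\|u_0\|_{L^\infty(B_{d(X)}(X))} \le 2 C_\star d(X)$. Plugging this into the displayed inequality, the factors of $d(X)$ cancel and we get $|\nabla u_0(X)| \le 4 K C_\star$, a universal constant independent of $X$.

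Combining the two regimes, $|\nabla u_0(X)| \le \max\{\tfrac{4K}{d_0}\|u_0\|_{L^\infty(\Omega)},\, 4KC_\star\}$ for every $X \in \Omega'$, which is the assertion. Two minor points deserve care and would be spelled out in the full write-up. First, one must make sense of ``$|\nabla u_0(X)| \le C$'' pointwise: in the zero set $u_0$ need not be differentiable in a classical sense, so the clean statement is that $u_0$ is locally Lipschitz in $\Omega'$ with the stated constant; the gradient bound then holds a.e., and the cleanest route is to bound the Lipschitz seminorm directly — for $X,Y\in\Omega'$ integrate the a.e. gradient bound along the segment $[X,Y]$, splitting that segment at the points where it enters and exits $\{u_0>0\}$ and using that $u_0$ is constant ($=0$) on the complementary pieces. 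Second, the ($K$-Lip) property as stated is for balls $B_d$ centered at the origin; by translation it applies to any ball $B_d(X)$ with $\bar B_d(X)\subset\Omega$, which is all we used. The main (and only real) obstacle is conceptual rather than technical: recognizing that Theorem~\ref{main}'s bound $\|u_0\|_{L^\infty(B_{d(X)}(X))} \lesssim d(X)$ is exactly the input needed to cancel the $1/d(X)$ blow-up in the interior gradient estimate, so that proximity to the free boundary — which a priori is the worst case — is in fact harmless.
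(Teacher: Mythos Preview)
Your proof is correct and follows essentially the same approach as the paper: pick a nearest free boundary point $\xi$, use Theorem~\ref{main} to bound $\sup_{B_{d}(X)} u_0 \lesssim d$, and then apply the ($K$-Lip) property so that the $1/d$ cancels. The paper's write-up is terser --- it does not split off the far-from-the-free-boundary regime or discuss the zero set --- but the mechanism is identical.
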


\begin{proof} It follows from Theorem \ref{main} and property $K$ that $u_0$ is pointwise Lipschitz continuous, i.e., 
$$
	|\nabla u_0 (\xi)| \le C(\xi).
$$
We have to show that $C(\xi)$ remains bounded as $\xi$ goes to the free boundary. For that, let $\xi$ be a point near the free boundary $\partial \{u_0 > 0 \}$ and denote by $Y \in \partial \{u_0 > 0 \}$ a point such that
$$
	|Y-\xi | =: d = \dist (\xi, \partial \{u_0 > 0 \}).
$$
From Theorem \ref{main}, we can estimate
$$
	\sup\limits_{B_{d/2}(\xi)} u_0(\xi) \le \sup\limits_{B_{2d }(Y)} u_0(\xi) \le C\cdot 2d.
$$
Applying  ($K$-Lip) property to the ball $B_{d/2}(\xi)$, we obtain
$$
	|\nabla u_0 (\xi)| \le \frac{2K}{d} \cdot 2Cd = 4C\cdot K,
$$
and the proof is concluded.
\end{proof}


\section{Lipschitz estimates for the minimization problem} \label{Sct Lip AC}

Limiting functions $u_0$ obtained as $\epsilon$ goes to zero from a sequence $u_\epsilon$ of minimizers of functional \eqref{functional} are minima of the discontinuous functional \eqref{functional AC}. Hence, limiting minima are Lipschitz continuous along their free boundaries. Nonetheless, as previously advertised in Theorem \ref{Lip AC}, the sharp Lipschitz regularity estimate holds indeed for {\it any} minima of the functional \eqref{functional AC}, not necessarily for limiting functions. 

In this intermediate Section we shall comment on how one can deliver this estimate directly from the analysis employed in the proof of Theorem \ref{main}. In fact, the proof of Lipschitz estimate for minima of the functional \eqref{functional AC} is simpler than the proof delivered in previous section, which has been based solely on the singular equation satisfied. When a minimality property is available, the arguments can be rather simplified. For instance, strong minimum principle holds for local minima but is no longer available for a generic critical point. This is part of the reason why the arguments from previous section had be be based on blow-ups and Liouville theorem.

\begin{theorem}\label{main minimum}
Let $u_0\geq 0$ be a minimum to
	$$
		\mathscr{F} (u)=\int_{\Omega} \left \{\frac{1}{2}\langle a_{ij}(X)\nabla u,\nabla u\rangle + \chi_{\{u>0\}} \right \} dX
	$$ 
	and assume that $u_0(\xi)=0$. Then there exists a universal constant $C>0$, depending only on dimension, ellipticity constants,  $\dist (\xi, \partial \Omega)$ and its $L^\infty$ norm	such that
	$$
		u_0(X) \leq C|X -\xi|,
	$$
	for all point $X \in \Omega$.
\end{theorem}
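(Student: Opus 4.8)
The plan is to mirror the strategy of Theorem \ref{main}, replacing the singular-PDE blow-up/Liouville argument by a cleaner minimality-based compactness argument. The backbone is again a flatness-improvement iteration: I will show there is a universal $\delta_\star \in (0,1)$ such that, after rescaling, $\sup_{B_{1/2}} u_0 \le \tfrac12$ whenever $u_0(0)=0$, and then iterate on dyadic balls to get $\sup_{B_{2^{-k}\sqrt{\delta_\star}}} u_0 \le 2^{-k}$, from which the linear bound $u_0(X)\le C|X-\xi|$ follows exactly as at the end of the proof of Theorem \ref{main}. The only genuinely new ingredient is the ``one-step'' estimate, which I will phrase as the analogue of Lemma \ref{key lemma}: for every $\theta>0$ there is $\delta>0$ such that if $u$ is a minimizer of the functional with the cavity term $\chi_{\{u>0\}}$ replaced by $\delta\,\chi_{\{u>0\}}$ on $B_r(Y)$ and $\inf_{B_r(Y)} u \le \delta$, then $\sup_{B_{r/2}(Y)} u \le \theta$. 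The scaling remark before the proof of Theorem \ref{main} —that $\tilde u(X)=u(\sqrt{\bar\delta}X)$ solves the equation with the source multiplied by $\bar\delta$, and correspondingly minimizes the functional with cavity weight $\bar\delta$— lets me reduce the small-weight hypothesis to a fixed zoom factor, exactly as in Section \ref{Sct Lip Funct}.

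To prove the one-step estimate I argue by contradiction and compactness. Suppose it fails: there are $(\lambda,\Lambda)$-elliptic matrices $a^k_{ij}$, weights $\delta_k=\mathrm{o}(1)$, minimizers $u_k$ of $\int_{B_r(Y)}\{\tfrac12\langle a^k_{ij}\nabla u,\nabla u\rangle + \delta_k\chi_{\{u>0\}}\}$ with $\inf_{B_r(Y)} u_k =: \eta_k = \mathrm{o}(1)$, yet $\sup_{B_{r/2}(Y)} u_k \ge \theta_0>0$. By the uniform bounds and De Giorgi–Nash–Moser (the cavity term contributes a bounded measure uniformly in $k$, cf. the remark in the proof of Theorem \ref{lim uniforme em C-alpha}), up to a subsequence $u_k \to u_\infty$ locally uniformly and weakly in $H^1$, with $a^k_{ij}\rightharpoonup b_{ij}$ weakly in $L^2_{\mathrm{loc}}$ to a $(\lambda,\Lambda)$-elliptic $b_{ij}$. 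Here is where minimality buys simplicity: one shows $u_\infty$ minimizes the purely quadratic Dirichlet energy $\int \tfrac12\langle b_{ij}\nabla u,\nabla u\rangle$ (the perturbation $\delta_k\chi_{\{u_k>0\}}\to 0$ in $L^1$, and $\Gamma$-convergence of the quadratic parts under $L^2_{\mathrm{loc}}$-weak convergence of the coefficients gives lower semicontinuity plus a recovery-sequence bound), so $u_\infty$ is $b$-harmonic in $B_r(Y)$. But $u_\infty\ge 0$ and $\inf_{B_r(Y)} u_\infty = \lim \eta_k = 0$, so by the strong minimum principle $u_\infty \equiv 0$ on $B_r(Y)$; this contradicts $\sup_{B_{r/2}(Y)} u_\infty \ge \theta_0$ via uniform convergence. (Alternatively, and perhaps more elementarily, one can avoid $\Gamma$-convergence: compare $u_k$ directly with its $a^k$-harmonic replacement $h_k$ on a slightly smaller ball, note the cavity energies differ by $O(\delta_k r^n)$, deduce $\|u_k-h_k\|_{H^1}\to 0$, then run the same Harnack/minimum-principle argument on the $h_k$.)

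The main obstacle I anticipate is passing the minimality to the limit cleanly, i.e.\ showing $u_\infty$ is $b$-harmonic. Weak $H^1$ lower semicontinuity of $u\mapsto\int \tfrac12\langle b_{ij}\nabla u,\nabla u\rangle$ is standard, but because the coefficients $a^k_{ij}$ themselves vary and converge only weakly in $L^2_{\mathrm{loc}}$, constructing competitors for $u_k$ whose $a^k$-energy converges to the $b$-energy of a given competitor for $u_\infty$ is the delicate point; this is precisely an $H$-convergence / $\Gamma$-convergence statement for quadratic forms with bounded measurable coefficients, and one should cite the standard theory (or use the harmonic-replacement detour above, which sidesteps it since there one only needs Caccioppoli bounds and the div-curl type a.e.\ gradient convergence already invoked in the proof of Lemma \ref{key lemma}). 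Once $u_\infty$ is $b$-harmonic and nonnegative with interior zero infimum, the strong minimum principle closes the argument, and the dyadic iteration plus the final two-line estimate reproduce verbatim the conclusion of Theorem \ref{main}.
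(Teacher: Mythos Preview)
Your proposal is correct and follows essentially the same route as the paper: the paper states exactly your one-step flatness lemma (Lemma~\ref{key lemma min}) for minimizers of the $\delta$-weighted functional, proves it by the same compactness-plus-strong-minimum-principle contradiction, and then iterates dyadically as in Theorem~\ref{main}. The only cosmetic difference is that the paper takes the hypothesis $u_0(Y)=0$ at the center rather than your $\inf_{B_r(Y)} u \le \delta$, which makes the interior-zero needed for the minimum principle immediate; the paper also treats the passage of minimality to the limit as routine, whereas you (helpfully) flag the $H$-/$\Gamma$-convergence or harmonic-replacement alternatives.
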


The proof follows the lines designed in Section \ref{Sct Lip Funct}. We obtain the corresponding flatness Lemma as follows:

\begin{lemma}\label{key lemma min}
 	Fixed a ball $B_r(Y) \Subset  \Omega$ and given $\theta > 0$, there exists a $\delta > 0$, depending only on $B_r(Y)$, dimension, ellipticity constants and $L^\infty$ norm of $u_0$, such that if $u_0$ is a nonnegative minimum of
	$$
		\mathscr{F}^{\delta} (u)=\int_{\Omega} \left \{\frac{1}{2}\langle a_{ij}(X)\nabla u,\nabla u\rangle + \delta \cdot \chi_{\{u>0\}} \right \} dX,
	$$
	and $u_0(Y) = 0$, then
	$$
		\sup\limits_{B_\frac{r}{2}(Y)} u_0 \le \theta.
	$$
\end{lemma}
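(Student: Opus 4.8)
The plan is to mimic the compactness-and-contradiction scheme used for Lemma \ref{key lemma}, but exploiting minimality rather than the PDE, which will allow us to replace the blow-up/Liouville argument by a more direct energy comparison. Suppose the conclusion fails: there are $\delta_k = \mathrm{o}(1)$ and nonnegative minimizers $u_k$ of $\mathscr{F}^{\delta_k}$ with $u_k(Y)=0$ but $\sup_{B_{r/2}(Y)} u_k \ge \theta_0 > 0$. The uniform $L^\infty$ bound (Theorem \ref{lim uniforme em C-alpha}, or rather its static analogue for \eqref{functional AC}) together with uniform $C^{0,\alpha}$ estimates gives, up to a subsequence, $u_k \to u_\star$ locally uniformly in a neighborhood of $Y$, and by the standard Caccioppoli estimate $u_k \rightharpoonup u_\star$ weakly in $H^1_{\mathrm{loc}}$; as before, $a_{ij}^k \rightharpoonup b_{ij}$ weakly in $L^2_{\mathrm{loc}}$ to a $(\lambda,\Lambda)$-elliptic matrix (if the coefficients are allowed to vary with $k$; if they are fixed, skip this). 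The function $u_\star$ is nonnegative, satisfies $u_\star(Y)=0$, and $\sup_{\overline{B_{r/2}(Y)}} u_\star \ge \theta_0$.

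The key point is to show $u_\star \equiv 0$ in $B_r(Y)$, which contradicts $\sup_{B_{r/2}(Y)} u_\star \ge \theta_0$. Since the penalization constant $\delta_k \to 0$, in the limit the measure term disappears and one expects $u_\star$ to be $b$-harmonic in $B_r(Y)$; then, being a nonnegative $b$-harmonic function vanishing at the interior point $Y$, the strong maximum principle (De Giorgi–Nash–Moser Harnack inequality) forces $u_\star \equiv 0$. To make this rigorous I would argue in two steps. First, pass to the limit in the Euler–Lagrange equation $\div(a_{ij}^k \nabla u_k) = \mu_k$ where $\mu_k \ge 0$ is supported on $\partial\{u_k>0\}$; the natural bound here is that testing minimality of $u_k$ against a competitor $\max(u_k - t, 0)$-type perturbation, or simply the energy comparison $\mathscr{F}^{\delta_k}(u_k) \le \mathscr{F}^{\delta_k}(\text{competitor})$, yields $\mu_k(B_{r'}(Y)) \le C \delta_k r'^{n-?}$-type control — actually the cleanest route is to show directly from minimality that the total mass of $\mu_k$ on a slightly smaller ball is $\mathrm{o}(1)$, using that replacing $u_k$ by its own $a^k$-harmonic replacement $h_k$ in $B_{r'}(Y)$ changes the $\chi$-term by at most $\delta_k |B_{r'}|$, hence $\int |a^k_{ij}(\nabla u_k-\nabla h_k)\cdot\nabla(u_k-h_k)| = \mathrm{o}(1)$, so $\nabla u_k - \nabla h_k \to 0$ in $L^2$. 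Since the $h_k$ converge (after the usual weak/a.e.-gradient and weak-coefficient convergences, exactly as in the proof of Lemma \ref{key lemma}, equations \eqref{Conv1}–\eqref{Conv4}) to a $b$-harmonic function, the limit $u_\star$ is itself $b$-harmonic in $B_{r'}(Y)$ for every $r' < r$, hence in $B_r(Y)$.

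With $u_\star$ nonnegative, $b$-harmonic in $B_r(Y)$ and $u_\star(Y)=0$, Harnack gives $u_\star \equiv 0$ in $B_r(Y)$, contradicting $\sup_{B_{r/2}(Y)} u_\star \ge \theta_0$. Hence the Lemma holds. The main obstacle — and the place where minimality genuinely does the work that the bare PDE could not — is establishing the vanishing of the free-boundary measures, i.e. the harmonic-replacement energy comparison showing $\nabla u_k \to \nabla u_\star$ strongly in $L^2_{\mathrm{loc}}(B_r(Y))$ with $u_\star$ $b$-harmonic; everything else (uniform estimates, extraction of convergent subsequences, passage to the limit in the weak formulation, final Harnack step) is routine and parallels the already-written proof of Lemma \ref{key lemma}. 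A secondary technical care is the handling of varying coefficients $a^k_{ij}$ under weak convergence, but this is dispatched exactly by the three-integral splitting $\mathscr{I}_k^1, \mathscr{I}_k^2, \mathscr{I}_k^3$ used there, with Egorov's theorem controlling the a.e.-gradient convergence. Once Lemma \ref{key lemma min} is in hand, Theorem \ref{main minimum} follows by the identical dyadic iteration carried out after Lemma \ref{key lemma}, rescaling by $\sqrt{\delta_\star}$ at each step and noting that the rescaled functions remain minimizers of a functional of the same type with the same penalization constant $\delta_\star$, since the Dirichlet energy and the $\chi_{\{u>0\}}$-term scale compatibly.
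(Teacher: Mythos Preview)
Your proposal is correct and follows essentially the same compactness-and-contradiction scheme as the paper: assume failure, extract a subsequence $u_k \to u_\star$ with $\delta_k \to 0$, identify $u_\star$ as a nonnegative solution (equivalently, minimizer) of the pure Dirichlet problem for a limiting $(\lambda,\Lambda)$-elliptic matrix, and finish with the strong minimum principle. The only cosmetic difference is that the paper phrases the limiting identification as ``$u_0$ is a local minimum of $\mathscr{F}^\infty$'' and invokes the minimum principle for minimizers \cite[Theorem 7.12]{G}, whereas your harmonic-replacement comparison is a concrete way to realize exactly that step and then apply Harnack---same content, slightly different bookkeeping.
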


\begin{proof}
The proof follows by a similar tangential analysis of the proof of Lemma \ref{key lemma}, but in fact in a simpler fashion. The tangential functional, obtained as $\delta \to 0$, satisfies minimum principle, hence the limiting function, from the contradiction argument, must be identically zero.  

Here are some details:  suppose, for the sake of contradiction, that the Lemma fails to hold. It means, for a sequence $(\lambda, \Lambda)$-elliptic matrices, $a_{ij}^k$, and a sequence of minimizers $u_k$ of
	$$
		\mathscr{F}^{k} (u)=\int_{\Omega} \left \{\frac{1}{2}\langle a^k_{ij}(X)\nabla u,\nabla u\rangle + \delta_k \cdot \chi_{\{u>0\}} \right \} dX,
	$$
	with $\delta_k = \text{o}(1)$, and, say $\|u_k\|_\infty \le 1$,
	 \begin{equation}\label{lip 01-01 min}
		\sup\limits_{B_r/2(Y)} u_{k} \ge \theta_0 > 0,
	\end{equation}
	for some $\theta_0 > 0$ fixed. As in Lemma \ref{key lemma}, by compactness, up to a subsequence, $u_k \to u_0$. Passing the limits we conclude $u_0$ is a local minimum of
	$$
		\mathscr{F}^{\infty} (u)=\int  \frac{1}{2}\langle b_{ij}(X)\nabla u_0,\nabla u_0\rangle  dX.
	$$
	Since, $u_0 \ge 0$ and $u_0(Y) =0$, by the strong minimum principle, see for instance \cite[Theorem 7.12]{G},  $u_0 \equiv 0$. We now reach a contradiction with \eqref{lip 01-01 min} for $k \gg 1$. The Lemma is proven.
\end{proof}

Once we have obtained Lemma \ref{key lemma min}, the proof of Theorem \ref{main minimum} follows exactly as the final steps in the proof of Theorem \ref{main}.


\section{Gradient control in two-phase problems}

In this Section we show that Theorem \ref{main} as well as Theorem \ref{main minimum} hold for two-phase problems, provided a one-side control is {\it a priori} known. It is interesting to compare this with the program developed in \cite{C1, C2, C3}, where monotonicity formula yields similar conclusion.

Let us briefly comment on such generalization, in the (simpler) minimization problem. The singular perturbed one can be treated similarly. We start by placing the negative values of $u$ within a universally controlled slab, i.e.:
\begin{equation}\label{lower lim}
	\inf\limits_\Omega u  \geq -\delta_\star,
\end{equation}
for a universal value $\delta_\star > 0$. Such a condition is realistic for models involving very low temperatures, i.e., for physical problem near the absolute zero for thermodynamic temperature (zero Kelvin). A scaling of the problem places {\it any} solution into this setting. Within the proof of Lemma \ref{key lemma min}, one includes condition  \eqref{lower lim} in the compactness argument. Here is the two-phase version of Lemma  \ref{key lemma min}:
\begin{lemma}\label{key lemma min 2p}
 	Fixed a ball $B_r(Y) \Subset  \Omega$ and given $\theta > 0$, there exists a $\delta > 0$, depending only on $B_r(Y)$, dimension, ellipticity constants and $L^\infty$ norm of $u$, such that if $u$ is a changing sign minimum of
	$$
		\mathscr{F}^{\tilde{\delta}} (u)=\int_{\Omega} \left \{\frac{1}{2}\langle a_{ij}(X)\nabla u,\nabla u\rangle + \tilde{\delta} \cdot \chi_{\{u>0\}} \right \} dX,
	$$
	for $\tilde{\delta} \le \delta$, with 
	$$
		u_0(Y) = 0 \quad \text{ and } \quad \inf\limits_\Omega u  \geq -\delta,
	$$
	then
	$$
		\sup\limits_{B_\frac{r}{2}(Y)} |u| \le \theta.
	$$
\end{lemma}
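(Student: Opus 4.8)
\textbf{Proof plan for Lemma \ref{key lemma min 2p}.}

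The plan is to run the same compactness-and-contradiction scheme used in Lemma \ref{key lemma min}, now keeping track of both phases simultaneously. Suppose the statement fails. Then there are $(\lambda,\Lambda)$-elliptic matrices $a_{ij}^k$, parameters $\tilde\delta_k = \mathrm{o}(1)$, and sign-changing minimizers $u_k$ of $\mathscr{F}^{\tilde\delta_k}$ with $\|u_k\|_\infty \le 1$, $u_k(Y) = 0$, and $\inf_\Omega u_k \ge -\delta_k$ with $\delta_k = \mathrm{o}(1)$, yet $\sup_{B_{r/2}(Y)} |u_k| \ge \theta_0 > 0$ for some fixed $\theta_0$. The uniform $L^\infty$ bound plus interior Caccioppoli estimates give local $H^1$ bounds, and the De Giorgi-Nash-Moser theory gives a uniform interior $C^{0,\alpha}$ bound; hence, up to a subsequence, $u_k \to u_0$ locally uniformly and weakly in $H^1_{\mathrm{loc}}$, while $a_{ij}^k \rightharpoonup b_{ij}$ weakly in $L^2_{\mathrm{loc}}$ for some $(\lambda,\Lambda)$-elliptic $b_{ij}$. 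Passing to the limit in the minimality (using $\tilde\delta_k\to 0$ so the volume term disappears, exactly as in Lemma \ref{key lemma min}) shows $u_0$ is a local minimizer of $u \mapsto \int \frac12\langle b_{ij}\nabla u,\nabla u\rangle\,dX$, so $\operatorname{div}(b_{ij}(X)\nabla u_0) = 0$ in the distributional sense.

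The new ingredient is the constraint $\inf_\Omega u_k \ge -\delta_k \to 0$, which passes to the limit and forces $u_0 \ge 0$ everywhere. Since also $u_0(Y) = 0$ and $u_0$ is a nonnegative $b$-harmonic function (in particular a nonnegative weak supersolution), the strong minimum principle — \cite[Theorem 7.12]{G}, as invoked in Lemma \ref{key lemma min} — yields $u_0 \equiv 0$ on the connected component of its domain containing $Y$, hence on all of $B_r(Y)$. Then $u_k \to 0$ uniformly on $\overline{B_{r/2}(Y)}$, and since $\inf_\Omega u_k \ge -\delta_k \to 0$ controls the negative part from below while $\sup_{B_{r/2}(Y)} u_k \to 0$ controls the positive part from above, we get $\sup_{B_{r/2}(Y)} |u_k| \to 0$, contradicting $\sup_{B_{r/2}(Y)} |u_k| \ge \theta_0$. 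This proves the Lemma.

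The main technical point to be careful about — and the step I expect to require the most attention — is the limit passage in the minimality property when the functional is only convex in $\nabla u$ with weakly converging, merely measurable coefficients: one needs lower semicontinuity of $u \mapsto \int \frac12\langle b_{ij}\nabla u,\nabla u\rangle\,dX$ along $a_{ij}^k$-energies, which follows by the standard argument of writing $\langle a_{ij}^k \nabla u_k,\nabla u_k\rangle \ge \langle a_{ij}^k \nabla u_k, 2\nabla w\rangle - \langle a_{ij}^k \nabla w,\nabla w\rangle$ for competitors $w$ and using the a.e. convergence of $\nabla u_k$ together with \eqref{Conv4}, precisely as in the proof of Lemma \ref{key lemma}. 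Everything else is a routine transcription of the one-phase argument; the key conceptual observation is simply that the a priori lower slab $\inf_\Omega u \ge -\delta$ is exactly what is needed so that the blow-down/limit is a one-signed function to which the strong minimum principle applies.
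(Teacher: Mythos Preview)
Your proposal is correct and follows essentially the same route as the paper's own argument: run the compactness-and-contradiction scheme of Lemma \ref{key lemma min}, use the extra hypothesis $\inf_\Omega u_k \ge -\delta_k \to 0$ to force the limit $u_0$ to be nonnegative, and then apply the strong minimum principle to conclude $u_0\equiv 0$ and reach a contradiction. The paper's proof is stated in one sentence and your write-up simply fills in the details it omits.
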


The proof of Lemma \ref{key lemma min 2p} follows the lines of Lemma \ref{key lemma min}, noticing that, by letting $\delta  = \text{o}(1)$ in the compactness approach, the tangential configuration is too a nonnegative minima of a functional which satisfies minimum principle.

\begin{theorem} Let $u_0$ be a sign changing minimum of the functional
	$$
		\mathscr{F} (u)=\int_{\Omega} \left \{\frac{1}{2}\langle a_{ij}(X)\nabla u,\nabla u\rangle + \chi_{\{u>0\}} \right \} dX,
	$$ 
	with  $u_0(\xi)=0$, $-1\le u_0\le 1$. Assume $u^{-}$ is Lipschitz continuous at $0$. Then $u^{+}$ (and therefore $u$) is too Lipschitz at $0$ and 
	$$
		|\nabla u(0)| \le C |\nabla u^{-}(0)|.
	$$
\end{theorem}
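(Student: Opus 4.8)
The strategy is to combine the one-phase Lipschitz estimate established in Theorem~\ref{main minimum} with the a priori control on $u^{-}$ in a scaling-invariant way, exactly as in Corollary~\ref{Cor-main}. First I would normalize: set $L := |\nabla u^{-}(0)|$, which by hypothesis is finite, so that $u^{-}(X) \le L|X| + \mathrm{o}(|X|)$ near $0$. The point $0 \in \partial\{u_0 > 0\}$ by assumption, but near $0$ the set $\{u_0 < 0\}$ may be nontrivial. The idea is to zoom in at $0$ at scale $r$ and renormalize by $Lr$: the rescaled functions $u_r(X) := u_0(rX)/(Lr)$ are minimizers of a functional of the type $\mathscr{F}^{\tilde\delta}$ with $\tilde\delta = r/L^2 \to 0$ (using the scaling remark preceding the proof of Theorem~\ref{main}, adapted to the minimization functional as in Lemma~\ref{key lemma min}), they vanish at $0$, and their negative parts satisfy $u_r^{-} \le |X| + \mathrm{o}(1)$ on, say, $B_1$, so that $\inf_{B_1} u_r \ge -1 - \mathrm{o}(1)$.

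\textbf{Key steps.} With this set-up I would argue as follows. (i) Apply Lemma~\ref{key lemma min 2p}, the two-phase flatness lemma, to $u_r$ on a fixed ball $B_1$ (or $B_{1/2}$): since $u_r(0)=0$, $\inf u_r \ge -\delta$ once $r$ is small enough, and the coefficient is $\le \delta$, we conclude $\sup_{B_{1/2}} |u_r| \le \theta$ for any prescribed $\theta$. (ii) Iterate, as in the inductive dyadic argument closing Theorem~\ref{main}: at each stage we rescale by $2/\theta$, check that the negative-part bound is preserved (here is where one must be careful that passing from $u_r$ to the next generation does not lose the control on $u^{-}$ — but since $u^{-}$ is genuinely Lipschitz at $0$, not merely bounded, the bound $u^{-}(X)\le L|X|$ is itself scaling invariant and survives each dyadic zoom), and obtain $\sup_{B_{2^{-k}}} u_0 \le C L\, 2^{-k}$. (iii) Fill in dyadic annuli to get $u_0^{+}(X) \le C L |X|$ for $X$ near $0$, i.e. $u^{+}$ is Lipschitz at $0$ with $|\nabla u^{+}(0)| \le CL = C|\nabla u^{-}(0)|$; combined with the hypothesis on $u^{-}$ this gives $|\nabla u(0)| \le C|\nabla u^{-}(0)|$. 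Alternatively, and more cleanly, one can mimic the proof of Corollary~\ref{Cor-main} directly: Theorem~\ref{main minimum} applies to $u^{+}$ once we know $0$ is a free boundary point with a tangential configuration satisfying the minimum principle, and the a priori Lipschitz bound on $u^{-}$ is precisely what lets the two-phase compactness argument in Lemma~\ref{key lemma min 2p} produce a nonnegative tangential minimizer.

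\textbf{Main obstacle.} The delicate point is the scaling bookkeeping in the iteration: one must verify that the renormalized functions at every generation of the blow-up simultaneously (a) remain minimizers of an admissible functional $\mathscr{F}^{\tilde\delta}$ with $\tilde\delta$ still $\mathrm{o}(1)$, (b) still vanish at the origin, and (c) still have their negative part trapped in a slab of the correct (shrinking, relative to the rescaling) size so that Lemma~\ref{key lemma min 2p} keeps applying. Items (a) and (b) are automatic from the homogeneity of the Alt--Caffarelli functional and the choice of base point; item (c) is where the hypothesis \emph{$u^{-}$ Lipschitz at $0$} — as opposed to merely bounded or H\"older — is essential, because only a linear bound $u^{-}(X) \le L|X|$ rescales correctly under $X \mapsto rX$, $u \mapsto u/(Lr)$. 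Once this is in place the rest is the verbatim dyadic iteration of Theorem~\ref{main}. A secondary, more technical nuisance is checking that the negative phase does not interfere with the minimality comparisons used inside Lemma~\ref{key lemma min 2p}; but since in the limit $\tilde\delta \to 0$ the tangential functional is the pure Dirichlet energy, for which the strong minimum principle (as cited, \cite[Theorem~7.12]{G}) forces the nonnegative tangential limit to vanish identically, this causes no real difficulty.
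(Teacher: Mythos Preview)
Your proposal follows the same strategy as the paper: reduce to Lemma~\ref{key lemma min 2p} by scaling, use the Lipschitz bound on $u^{-}$ to guarantee the slab condition $\inf u \ge -\delta$ at every step, and then iterate dyadically exactly as in the closing of Theorem~\ref{main}. You also correctly identify the main obstacle (your item~(c)) and why the \emph{Lipschitz} hypothesis on $u^{-}$---as opposed to mere H\"older---is what makes the iteration close.

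There is, however, a scaling error in your plan that breaks step~(i) as written. Under $w(X) = \mu\, u_0(\rho X)$ with $u_0$ a minimizer of $\mathscr{F}$, one checks that $w$ minimizes $\mathscr{F}^{\tilde\delta}$ with $\tilde\delta = (\mu\rho)^{-2}$; your choice $\mu=(Lr)^{-1}$, $\rho=r$ therefore yields $\tilde\delta = L^2$, \emph{independent of $r$}, not $r/L^2$. If $L$ is not small this never enters the regime of Lemma~\ref{key lemma min 2p}. The paper's normalization avoids this: scale spatially only, $v(X):=u_0(\tau_0 X)$, with a universal $\tau_0$ chosen small enough that both $\tau_0^2 \le \delta_{1/2}$ and $|\nabla v^{-}(0)| = \tau_0 L \le \delta_{1/2}$. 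Then the gradient-preserving iteration $v_{k+1}(X):=2\,v_k(X/2)$ keeps $\tilde\delta=\tau_0^2$ fixed, and the Lipschitz bound on $v^{-}$ gives $\inf_{B_1} v_{k}\ge -|\nabla v^{-}(0)|\ge -\delta_{1/2}$ at every step; this yields $\sup_{B_{\tau_0 2^{-k}}}|u_0|\le 2^{-k}$ and hence $|\nabla u(0)|\le 2/\tau_0 \sim C\,|\nabla u^{-}(0)|$. With this correction your argument coincides with the paper's.
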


\begin{proof}
We can assume, with no loss, $\Omega = B_2$ and $\xi = 0$. By universal continuity estimate, Theorem \ref{lim uniforme em C-alpha}, we can choose a universal number $0< \tau_0 \ll 1$, such that the function $ v \colon B_1 \to \mathbb{R}$, given by
$$
	{v}(X) := u(\tau_0 X),
$$
satisfies the hypothesis of Lemma \ref{key lemma min 2p}, for $\theta = \frac{1}{2}$. Selecting $0< \tau_0 \ll |\nabla u^{-}(0)|^{-1}$, even smaller if necessary, we can assure
$$
	|\nabla v^{-}(0)| \le \delta_{1/2},
$$ 
where $\delta_{1/2}$ is the number from Lemma \ref{key lemma min 2p} when we take $\theta = \frac{1}{2}$. Define in the sequel $v_2 \colon B_1 \to \mathbb{R}$ by
$$
	v_2(X) := 2 v(\frac{1}{2}X).
$$
Clearly $v_2$ is a minimum of a functional $\mathscr{F}^{\tilde{\delta}} $ for $\tilde{\delta} \le \delta_{1/2}$, $v_2(0) = 0$, and by Lemma \ref{key lemma min 2p}, it also verifies $-1\le v_2 \le 1$. We estimate
$$
	\inf\limits_{B_1} v_2 \ge - |\nabla v^{-}(0)|  \ge - \delta_{1/2}.
$$
Hence, $v_2$ is also within the hypothesis of Lemma \ref{key lemma min 2p}. Carrying the induction process shows that
$$
	\sup\limits_{B_{\tau_0 2^{-k}}} |u| \le 2^{-k}.
$$
Now, given $0< r \ll 1$, we choose $k \in \mathbb{N}$ such that $\tau_0 2^{-(k+1)} \le r \le \tau_0 2^{-k}$ and compute
$$
	\sup\limits_{B_r} |u| \le \sup\limits_{B_{\tau_0 2^{-k}}} |u| \le 2^{-k} \le \frac{2}{\tau_0} r.
$$
The Theorem is proven.
\end{proof}

\medskip

Similarly, one can use these set of ideas when a density control of the negative phase is given. For instance if 
\begin{equation}\label{density cond}
	\Leb (\{u<0\} \cap B_r) \le \delta_\star r^n,
\end{equation}
where $\delta_\star \ll 1$ is universally small, then Lipschitz regularity along the free boundary holds. Indeed, as before, one could add the constrain \eqref{density cond} within the compactness  approach, letting $\delta_\star = \text{o}(1)$, and the limiting configuration is too a nonnegative function. Now, within the induction procedure, condition \eqref{density cond} scales  properly, in the sense that at each scale, condition \eqref{density cond} holds with the same initial constant $ \delta_\star$. Compare for instance with \cite{LS}.

\section{Geometric estimates of the free boundary} 

In this Section we show how  the improved estimate given by Theorem \ref{main} (or else Theorem \ref{main minimum}) implies some geometric estimates  on the free boundary. Hereafter in this Section, $u_0 \ge 0$ will always denote a limit point obtained from a sequence of minimizers of the functional \eqref{functional}. We will denote by $\Omega_0$ the non coincidence set, $\Omega_0 := \{u_0 > 0 \} \cap \Omega$.  Unless otherwise stated, no continuity assumption is imposed upon the medium $a_{ij}$.


\begin{theorem}[Nondegeneracy]\label{strong non-degeneracy1}
 	Let  $\Omega'  \Subset \Omega$ be a given subdomain and $ Y \in  \Omega' \cap \overline{\{u_0 > 0\}}$ then 			
 	$$
 		\sup_{B_r(Y)}u_0 \ge c \cdot r.
 	$$
	for $r<\text{dist}(\Omega' , \partial \Omega)$.
\end{theorem}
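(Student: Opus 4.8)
The plan is to establish the nondegeneracy estimate $\sup_{B_r(Y)} u_0 \ge c\,r$ by a standard barrier/comparison argument, reducing first to the case $Y \in \{u_0 > 0\}$ and then passing to the closure by continuity. So suppose $Y \in \{u_0>0\} \cap \Omega'$ and fix $r < \dist(\Omega', \partial\Omega)$. Since $u_0$ is a limit of minimizers $u_\epsilon$ of $\mathscr{F}_\epsilon$, it is a minimizer of the Alt--Caffarelli functional $\mathscr{F}$ in \eqref{functional AC}; the cleanest route is to work directly with $u_0$ as a minimizer of $\mathscr{F}$.

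First I would set up the comparison. Let $h$ be the $a$-harmonic replacement of $u_0$ in $B_r(Y)$, i.e. $\div(a_{ij}\nabla h) = 0$ in $B_r(Y)$ with $h = u_0$ on $\partial B_r(Y)$; by the maximum principle $h \ge u_0 \ge 0$, and on the set where $u_0 = h$ one has nothing to gain, so the competitor $h$ tests the minimality of $u_0$. Comparing energies,
\begin{equation}\label{nd-energy}
\int_{B_r(Y)} \tfrac12 \langle a_{ij}\nabla u_0, \nabla u_0\rangle + \mathcal{L}^n(\{u_0>0\}\cap B_r(Y)) \le \int_{B_r(Y)} \tfrac12\langle a_{ij}\nabla h,\nabla h\rangle + \mathcal{L}^n(\{h>0\}\cap B_r(Y)),
\end{equation}
and since $h \ge u_0$ we have $\{h>0\} \supseteq \{u_0>0\}$, so the measure terms contribute $-\mathcal{L}^n(\{h>0\}\setminus\{u_0>0\}\cap B_r(Y))$ to the right-hand side. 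Using the orthogonality identity $\int \langle a_{ij}\nabla h,\nabla h\rangle - \int\langle a_{ij}\nabla u_0,\nabla u_0\rangle = -\int \langle a_{ij}\nabla(u_0-h),\nabla(u_0-h)\rangle + 2\int\langle a_{ij}\nabla h,\nabla(u_0-h)\rangle$, and $\int\langle a_{ij}\nabla h,\nabla(u_0-h)\rangle = 0$ by $a$-harmonicity of $h$ with matching boundary values, \eqref{nd-energy} yields
\begin{equation}\label{nd-caccio}
\tfrac12 \int_{B_r(Y)} \lambda |\nabla(u_0 - h)|^2 \le \mathcal{L}^n\big((\{h>0\}\setminus\{u_0>0\})\cap B_r(Y)\big) \le \mathcal{L}^n(B_r) = \omega_n r^n.
\end{equation}

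Next I would combine \eqref{nd-caccio} with two ingredients to get the lower bound. By De Giorgi--Nash--Moser/Harnack for the $a$-harmonic function $h$, since $h(Y) \ge u_0(Y) > 0$ and $h \le \sup_{\partial B_r(Y)} u_0 \le \sup_{B_r(Y)} u_0 =: M$, a suitable Poincaré inequality on $B_r(Y)$ bounds $\fint_{B_r(Y)}|u_0 - h|$ — and hence $h(Y) - (\text{average of } u_0)$ up to the $L^2$ gradient difference — in terms of $r^{1-n/2}\|\nabla(u_0-h)\|_{L^2(B_r)} \lesssim r^{1-n/2}\cdot r^{n/2} = r$. More precisely, the mean value property / Harnack for $h$ gives $h(Y) \le C\fint_{B_r(Y)} h$, and by Poincaré $\fint_{B_r(Y)} h \le \fint_{B_r(Y)} u_0 + C r^{1-n}\int |\nabla(u_0-h)| \le M + C r^{1-n}\cdot r^{n/2}\cdot(\omega_n r^n)^{1/2} = M + C' r$. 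If $\sup_{B_r(Y)} u_0 = M$ were $\le c\,r$ for $c$ small, we would force $h(Y)$ — and thus $u_0(Y)$ — to be small relative to $r$; iterating/rescaling (the functional $\mathscr{F}$ is invariant under $u \mapsto r^{-1}u(Y + rX)$ together with $a_{ij} \mapsto a_{ij}(Y+rX)$, as already exploited in the zoom-in remark after Lemma \ref{key lemma}) this drives $u_0(Y)$ to $0$ along a geometric sequence of radii and contradicts $u_0(Y) > 0$, or more directly contradicts the linear growth Theorem \ref{linear growth} applied after rescaling. The cleanest packaging: assume $\sup_{B_r(Y)}u_0 \le c r$, rescale $v(X) = (cr)^{-1} u_0(Y+rX)$, so $v \le 1$ on $B_1$, $v$ minimizes $\mathscr{F}^{c^2}$ (the $\delta = c^2$ version), $v(0) > 0$; for $c$ smaller than the threshold $\delta_\star$ from Lemma \ref{key lemma min}, conclude $\sup_{B_{1/2}} v \le 1/2$, and iterate to get $v \equiv 0$ near $0$, contradicting $v(0) > 0$.

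The main obstacle I anticipate is the mild subtlety that $u_0$ is only known a priori to be a \emph{limit} of minimizers of $\mathscr{F}_\epsilon$, not directly a minimizer of $\mathscr{F}$ — though the excerpt already records (after Theorem \ref{lim uniforme em C-alpha}) that $u_0$ is indeed a minimizer of \eqref{functional AC}, so this is resolved by citation. A second, more technical point is handling the boundary point case $Y \in \overline{\{u_0>0\}}\setminus\{u_0>0\}$: here one takes $Y_j \in \{u_0>0\}$ with $Y_j \to Y$, applies the interior estimate on $B_{r - |Y-Y_j|}(Y_j)$, and lets $j\to\infty$ using continuity of $u_0$ (Theorem \ref{lim uniforme em C-alpha}) to recover $\sup_{B_r(Y)} u_0 \ge c\,r$. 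Everything else — the Caccioppoli/orthogonality computation \eqref{nd-caccio}, the Harnack and Poincaré inequalities — is routine and uniform in the medium, so no continuity assumption on $a_{ij}$ is needed, consistent with the statement.
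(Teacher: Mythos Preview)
Your ``cleanest packaging'' contains a genuine error that breaks the argument. When you rescale $v(X) = (cr)^{-1}u_0(Y+rX)$, a direct computation shows that $v$ minimizes $\mathscr{F}^{1/c^{2}}$, not $\mathscr{F}^{c^{2}}$: writing $u_0(Y+rX) = cr\,v(X)$ one finds $\nabla u_0 = c\,\nabla v$, and the change of variables gives
\[
\int_{B_r(Y)}\Big(\tfrac12\langle a\nabla u_0,\nabla u_0\rangle + \chi_{\{u_0>0\}}\Big)\,dZ
= r^n c^{2}\int_{B_1}\Big(\tfrac12\langle \tilde a\nabla v,\nabla v\rangle + c^{-2}\chi_{\{v>0\}}\Big)\,dX.
\]
Thus for $c$ small the penalty coefficient $c^{-2}$ is \emph{large}, and Lemma~\ref{key lemma min} --- which requires $\delta$ small --- does not apply. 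In addition, Lemma~\ref{key lemma min} has the hypothesis $u_0(Y)=0$, whereas your setup has $v(0)>0$; the compactness step in that lemma uses the strong minimum principle precisely at a zero of the limit, so the conclusion $\sup_{B_{1/2}}v\le\tfrac12$ is unavailable here. The flatness lemmas of Sections~\ref{Sct Lip Funct}--\ref{Sct Lip AC} are tools for the \emph{upper} (Lipschitz) bound, not for nondegeneracy; the two estimates are not interchangeable under this rescaling.

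Your first approach via harmonic replacement also does not close as written. The energy identity \eqref{nd-caccio} and the Harnack/Poincar\'e bounds yield only $u_0(Y)\le h(Y)\le C(M+r)$, which is automatic from $u_0(Y)\le M$ and gives no contradiction when $M\le cr$. The replacement $h\ge u_0$ is the competitor used for Lipschitz regularity; for nondegeneracy one needs the opposite type of competitor --- one that \emph{cuts a hole} (forces $v\equiv 0$ in a subball) --- so that smallness of $\sup u_0$ makes the Dirichlet cost $O(M^{2}r^{n-2})$ beatable by the savings $|B_{r/2}|$ in the measure term. This is the energy argument behind Theorem~\ref{linear growth}, and it is what the paper invokes: pass Theorem~\ref{linear growth} to the limit to obtain $u_0(X_0)\ge c\,\dist(X_0,\partial\{u_0>0\})$, then combine this weak nondegeneracy with the Lipschitz estimate of Theorem~\ref{main} through a Caffarelli polygonal argument to upgrade to the strong form $\sup_{B_r(Y)}u_0\ge cr$. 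Your proposal does not touch either ingredient of that route.
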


\begin{proof}
	Letting $\epsilon \to 0$ in Theorem \ref{linear growth} we conclude $u_0$ grow linearly away from the free boundary. Owning Lipchitz regularity along $\partial \{u_0 > 0\} \cap \Omega'$, Theorem \ref{main}, we can then perform a polygonal type of argument {\it a la} Caffarelli, see for instance \cite[Lemma 4.2.7]{Teixeira book}, to establish such a strong non-degeneracy estimate.
\end{proof}

\begin{theorem}\label{Hausdorff}
	Given a subdomain $\Omega' \Subset \Omega$, there exists a constant $\theta > 0$, such that if $X_0\in \partial\Omega_0$ is  a free boundary point then	
	$$
		\mathcal{L}^n(\Omega_0\cap B_{r}(X_0))\geq \theta r^n,
	$$
	for all $0< r < \dist (\partial \Omega', \partial \Omega)$. Furthermore there is a universal constant $0< \varsigma \le 1$ such that
	$$
		\mathrm{dim}_{\mathcal{H}}(\partial\Omega_0)\leq n- \varsigma,
	$$
	where $dim_{\mathcal{H}}(E)$ means the Hausdorff dimension to the set $E$.
\end{theorem}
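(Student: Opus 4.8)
\textbf{Proof proposal for Theorem \ref{Hausdorff}.}

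The plan is to derive both conclusions from the two-sided estimate of Theorem \ref{main} (sharp Lipschitz growth from the free boundary) together with the nondegeneracy of Theorem \ref{strong non-degeneracy1}. First I would establish the positive density of the non-coincidence set. Fix a free boundary point $X_0 \in \partial\Omega_0$ and $0 < r < \dist(\partial\Omega',\partial\Omega)$. By nondegeneracy there is a point $Y \in \overline{B_{r/2}(X_0)}$ with $u_0(Y) \ge c r/2$. On the other hand, $u_0$ vanishes on $\partial\Omega_0$, so by the Lipschitz estimate of Theorem \ref{main} applied at a nearest free boundary point to $Y$ (whose distance to $Y$ is at most $c r /(2C)$, else $u_0(Y)$ would be too small), the ball $B_\rho(Y)$ with $\rho := \frac{c}{2C}\,\frac{r}{2}$ stays inside $\Omega_0$. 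Since $B_\rho(Y) \subset B_r(X_0)$ for $r$ small (after adjusting the constant so that $r/2 + \rho < r$), we get $\mathcal{L}^n(\Omega_0 \cap B_r(X_0)) \ge \omega_n \rho^n = \theta r^n$ with $\theta = \omega_n (c/4C)^n$, which is the claimed lower density bound. The same argument applied to points of $\Omega_0$ rather than only free boundary points gives interior density of $\Omega_0$ at every such point.

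Next I would bound the Hausdorff dimension of $\partial\Omega_0$. The standard route, which I would follow, is to show that $\partial\Omega_0$ has positive Lebesgue density nowhere, i.e. that the \emph{complement} $\{u_0 = 0\}^\circ \cup \Omega_0 = \Omega \setminus \partial\Omega_0$ — more precisely the open set $\{u_0=0\}^\circ$ together with $\Omega_0$ — fills a definite fraction of every small ball centered on the free boundary. From the density estimate just proved, $\Omega_0$ occupies at least a fixed fraction $\theta$ of $B_r(X_0)$; hence $\mathcal{L}^n(\partial\Omega_0 \cap B_r(X_0)) \le (1-\theta)\,\omega_n r^n$ whenever the zero set also has a density bound, but in fact what one really needs is a porosity-type statement: inside every ball $B_r(X_0)$ centered on $\partial\Omega_0$ one finds a sub-ball of comparable radius lying entirely in $\Omega_0$ (this is exactly the content of the density argument above, which produces $B_\rho(Y)\subset \Omega_0 \cap B_r(X_0)$ with $\rho \ge c' r$). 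A set whose complement contains, in every small ball centered on it, a sub-ball of proportional radius is porous, and a classical covering/box-counting argument (see e.g. the Alt–Caffarelli framework) shows a porous set has Hausdorff dimension at most $n - \varsigma$ for some $\varsigma = \varsigma(n, c') > 0$. I would carry this out by the usual iteration: cover $\partial\Omega_0 \cap B_r$ by a grid of cubes of side $\sim r$, discard from each the proportional sub-ball free of $\partial\Omega_0$, subdivide the survivors, and iterate; at each step the number of cubes meeting $\partial\Omega_0$ grows by a factor strictly smaller than the full subdivision factor, yielding the dimension gap.

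The main obstacle, and the only place real care is needed, is tracking the geometric constants so that the porosity/density statement holds \emph{uniformly down to scale zero} with a constant depending only on the universal data. The Lipschitz constant $C$ from Theorem \ref{main} depends on $\dist(\xi,\partial\Omega)$ and the $L^\infty$ bound, and the nondegeneracy constant $c$ likewise depends on $\dist(\Omega',\partial\Omega)$; since both estimates in Theorems \ref{main} and \ref{strong non-degeneracy1} are scale-invariant once we are at interior points, the ratio $\rho/r = c/(4C)$ is a genuine universal constant, and then $\varsigma$ extracted from the porosity argument is universal as well — so no logarithmic loss or scale-dependence creeps in. One must also check the elementary geometry that the sub-ball $B_\rho(Y)$ produced by nondegeneracy plus Lipschitz control genuinely sits inside $B_r(X_0)$, which only costs a harmless halving of constants. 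With these bookkeeping points in place the two conclusions follow, and setting $\varsigma$ to be the exponent from the porosity estimate completes the proof. $\hfill\square$
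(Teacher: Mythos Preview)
Your proposal is correct and follows essentially the same approach as the paper: nondegeneracy plus the Lipschitz bound produce a sub-ball of proportional radius inside $\Omega_0 \cap B_r(X_0)$, giving both the density lower bound and porosity of $\partial\Omega_0$, from which the Hausdorff dimension drop follows by the classical result (the paper cites Koskela--Rohde rather than spelling out the box-counting iteration). One small slip: in your parenthetical the nearest free boundary point has distance \emph{at least} $cr/(2C)$ (not ``at most''), which is what you actually use when concluding $B_\rho(Y)\subset\Omega_0$.
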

	
\begin{proof} It follows readily from non-degeneracy property, Theorem \ref{strong non-degeneracy1}, there exists a point $\xi_r \in \partial B_r(X_0)$ such that
$$ 
	u_0(\xi_r) \ge c r,
$$
for a constant $c>0$ depending only on the data of the problem. Now, for $0< \mu \ll 1$, small enough, there holds
\begin{equation}\label{mes01}
	B_{\mu r}(\xi_r) \subset \Omega_0.
\end{equation}
Indeed, one simply verifies that if
$$
	B_{\mu r}(\xi_r)  \cap \partial \{u_0 > 0 \} \not = \emptyset,
$$
then from Theorem \ref{main} we can estimate
$$
	c r \leq u_0(\xi_r) \leq \sup\limits_{B_{\mu r}(Z_0)} u_0 \le C   \mu r
$$
which is  a lower bound for $\mu$. Hence, if $\mu < c \cdot C^{-1}$, \eqref{mes01} must hold. Now, with such $\mu>0$ fixed, we estimate
$$
        \Leb\left (B_r(X_0) \cap \Omega_0 \right ) \ge
        \Leb \left (  B_r(X_0) \cap B_{\mu r}(\xi_r) \right ) \ge \theta r^n
$$
and the uniform positive density is proven. 

Let us turn our attention to the Hausdorff dimension estimate. Given $\sigma = X_0$ in $\partial \{u>0\}$, we choose
$$
	\sigma\prime = t \xi_r + (1-t)X_0,
$$
with $t$ close enough to $1$ as to 
$$
	B_{\frac{1}{2}\mu \cdot r }(\sigma\prime) \subset B_{\mu}(\xi_r) \cap B_r(\sigma) \subset B_r(\sigma) \setminus \partial \{u>0\}.
$$
We have verified $\partial \{u>0\} \cap B_{1/2}$ is $(\mu/2)$-porous, hence by a classical result, see for instance \cite[Theorem 2.1]{PR}, its Hausdorff dimension is at most $n - C\mu^n$, for a dimensional constant $C>0$.
\end{proof}

For problems modeled in a merely measurable medium, one should not expect an improved Hausdorff estimate for the free boundary. When diffusion is governed by the Laplace operator, then Alt-Caffarelli theory gives that $\varsigma = 1$. A natural question is what is the minimum organization of the medium as to obtain perimeter estimates of the free boundary. Next Theorem gives an answer to that issue.

\begin{theorem}\label{per}Assume $a_{ij}$ satisfy ($K$-Lip) property for some $K>0$. Then the free boundary has local finite perimeter. In particular $dim_{\mathcal{H}}(\partial\Omega_0) = n- 1$.
\end{theorem}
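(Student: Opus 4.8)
The plan is to upgrade the porosity/density information already obtained in Theorem \ref{Hausdorff} into a genuine measure-theoretic perimeter bound, using the extra gradient control afforded by ($K$-Lip). The key observation is that under ($K$-Lip), Corollary \ref{Cor-main} gives $|\nabla u_0| \le C$ uniformly on $\Omega' \Subset \Omega$; combined with the nondegeneracy (Theorem \ref{strong non-degeneracy1}) and the uniform positive density of $\Omega_0$ (Theorem \ref{Hausdorff}), this places us in exactly the classical Alt--Caffarelli setting, for which finite perimeter of $\partial \Omega_0$ is known. So the proof should reduce the statement to the Alt--Caffarelli machinery rather than reprove it.

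Concretely, here are the steps I would carry out. First, fix $\Omega' \Subset \Omega$ and recall from Corollary \ref{Cor-main} that $u_0 \in C^{0,1}_{\loc}(\Omega')$, so $\div(a_{ij}(X)\nabla u_0) = \mu$ for a nonnegative Radon measure $\mu$ supported on $\partial \Omega_0$. Second, establish the two matching bounds on $\mu$: the upper bound $\mu(B_r(X_0)) \le C r^{n-1}$ follows by testing the equation against a cutoff and using the Lipschitz bound on $u_0$ together with the linear growth $\sup_{B_r} u_0 \le C r$ (Theorem \ref{main}); the lower bound $\mu(B_r(X_0)) \ge c r^{n-1}$ at free boundary points follows from nondegeneracy, $\sup_{B_r(X_0)} u_0 \ge c r$, again by integrating the equation over $B_r(X_0)$ and comparing with the $a$-harmonic replacement. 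Third, these density estimates for $\mu$, together with $\mathcal H^{n-1}$-rectifiability considerations, identify $\mu$ as comparable to $\mathcal H^{n-1} \lfloor \partial_{\red}\Omega_0$ and yield $\mathcal H^{n-1}(\partial \Omega_0 \cap \Omega') < \infty$; by the density lower bound on $\Omega_0$ and the standard argument of Alt--Caffarelli (see \cite{AC}), $\chi_{\Omega_0} \in BV_{\loc}(\Omega')$, i.e. $\Omega_0$ has locally finite perimeter. Finally, the Federer criterion and the uniform density estimates force $\mathcal H^{n-1}(\partial\Omega_0 \setminus \partial_{\red}\Omega_0) = 0$, so $\mathcal H^{n-1}(\partial \Omega_0) = \Per(\Omega_0) < \infty$ locally; combined with the nondegeneracy-porosity lower bound, which prevents the dimension from dropping below $n-1$, this gives $\dim_{\mathcal H}(\partial \Omega_0) = n-1$.

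The main obstacle I anticipate is the lower bound $\mu(B_r(X_0)) \ge c r^{n-1}$: unlike in the constant-coefficient case, one cannot invoke a monotonicity formula, and the measure $\mu$ is only controlled through the divergence-form PDE with merely measurable $a_{ij}$. The way around this is to use the $a$-harmonic comparison function $h$ in $B_r(X_0)$ with $h = u_0$ on $\partial B_r(X_0)$: by ($K$-Lip) and nondegeneracy one controls $\sup_{B_{r/2}} h$ from above (it cannot be too large since $u_0 \le h$ and $h$ is Lipschitz with the boundary data bound), while $u_0$ attains a value of order $r$ somewhere on $\partial B_r(X_0)$, and $\int_{B_r} (h - u_0) \, d\mu$-type energy comparison then forces $\mu$ to carry mass of order $r^{n-1}$; this is where the ($K$-Lip) hypothesis is genuinely used and where the argument differs from the classical one. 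A secondary, more routine point is verifying that the weak limit procedure used to produce $u_0$ is compatible with all of this — but since $u_0$ is just a minimizer of \eqref{functional AC} satisfying the Euler--Lagrange equation, and ($K$-Lip) is a property of the fixed matrix $a_{ij}$, no new compactness is needed beyond what Corollary \ref{Cor-main} already supplies.
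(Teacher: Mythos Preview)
Your route is valid but genuinely different from the paper's. The paper does \emph{not} work with the Radon measure $\mu=\div(a_{ij}\nabla u_0)$ and Ahlfors-type bounds; instead it proves a \emph{strip estimate}: for $X_0\in\partial\Omega_0$ and $0<\mu\ll r$,
\[
\int_{\{0<u_0<\mu\}\cap B_r(X_0)}|\nabla u_0|^2\le C\mu r^{n-1},
\]
obtained by integration by parts and the global Lipschitz bound from Corollary~\ref{Cor-main}. It then matches this energy against the volume of the strip by a finite-overlap covering of $\partial\Omega_0\cap B_r(X_0)$ by balls of radius $\sim\mu$, using nondegeneracy and Poincar\'e to find in each ball two sub-balls where $u_0\ge\frac34\mu$ and $u_0\le\frac23\mu$; this forces $\int_{2B_j\cap A}|\nabla u_0|^2\ge c\,\Leb(B_j)$ and hence $\Leb(\{0<u_0<\mu\}\cap B_r(X_0))\le C\mu r^{n-1}$, from which the $\mathcal H^{n-1}$ bound follows by classical considerations. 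The paper's argument is more self-contained and avoids Green-function representations or the full Alt--Caffarelli BV machinery, at the price of the covering step.

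Your approach, by contrast, goes straight for $cr^{n-1}\le\mu(B_r(X_0))\le Cr^{n-1}$ and then invokes Federer/BV theory. The upper bound is fine as you say. For the lower bound, your description is slightly off: the point is not ``$\sup_{B_{r/2}}h$ from above'', but rather a \emph{lower} bound on $v:=h-u_0$ on a set of measure $\sim r^n$. Concretely: Harnack for $h$ plus nondegeneracy give $v(X_0)=h(X_0)\ge cr$; then ($K$-Lip) applied to $h$ (together with Corollary~\ref{Cor-main} for $u_0$) makes $v$ Lipschitz in $B_{r/2}$ with a universal constant, so $v\ge \tfrac{c}{2}r$ on a ball $B_{\rho_0}(X_0)$ with $\rho_0\sim r$; Poincar\'e then yields $\int_{B_r}|\nabla v|^2\ge c'r^n$, and the identity $\int v\,d\mu=\int a_{ij}\nabla v\cdot\nabla v$ together with $\sup v\le Cr$ gives $\mu(B_r)\ge c''r^{n-1}$. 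This is where ($K$-Lip) is really used in your scheme. With that correction your argument closes; it is more structural and feeds directly into the BV framework, whereas the paper's strip argument is computationally lighter but less conceptual.
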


\begin{proof}
	Fixed a free boundary point $X_0 \in \partial \Omega_0$ and given a small, positive number $\mu$ one checks that
	\begin{equation}\label{strip lemma1}
		\int\limits_{\{0< u_0 < \mu \} \cap B_r(X_0)} |\nabla u_0|^2 \le C \mu r^{n-1}.
	\end{equation}
	This is obtained by integration by parts and Lipschitz estimate on $B_r$. In the sequel, we compare the left hand side of (\ref{strip lemma1}) with $\left | {\left\{0
<u_{0}< \mu\right\}\cap B_{r}(X_0)} \right |$. This is done by considering a finite overlapping converging, $\left\{B_j \right\}$, of $\partial\Omega_{0}$ by balls of radius
proportional to $\mu$  and centered on $\partial\Omega_{0} \cap
B_r(X_0)$.  In each ball $B_j$, we can
find subballs $B_{j}^{1},~B_{j}^{2}$ with the radii $\sim \mu$, such that
$$
	 u_{0}  \geq\frac{3}{4}\mu \text{ in }B_{j}^{1} \quad \text{and} \quad 
    u_0 \leq\frac{2}{3}\mu \text{ in } B_{j}^{2}.
$$
Existence of such balls is obtained by nondegeneracy property followed by Poincar\'e inequality.
Now, for $\mu \ll r$, we have
$$
    B_{r}(X_0)\cap\left\{0 < u_{0} <\mu \right\} \subset
    \bigcup 2B_{j}\subset B_{4r}(X_0).
$$
Finally, if we call $A := \left
\{ 0 <u_{0}<\mu\right\}$, the above gives
$$
    \begin{array}{lll}
        \displaystyle\int\limits_{B_{4r}(X_0)\cap A}|\nabla u_{0}|^2dX &\geq&
         \displaystyle\int\limits_{\big (\cup 2 B_{j}\big ) \cap A}|\nabla u_{0}|^2dX
         \\
         &\geq& \dfrac{1}{m}\displaystyle\sum
         \displaystyle\int\limits_{2B_{j}\cap A }|\nabla u_{0}|^2dX \\
         &\geq& c\displaystyle\sum \Leb(B_{j}) \\
         &\geq& c \Leb (B_{r}(X_0)\cap A ),
    \end{array}
$$
where $m$ is the total number of balls, which can be taken
universal, by Heine-Borel's Theorem. Combining the above estimate with \eqref{strip lemma1}, gives 
$$
	\Leb (\{0< u_0 < \mu   \} \cap B_{r}(X_0) ) \le C \mu r^{n-1},
$$
which implies the desired Hausdorff estimate by classical considerations. For further details, see for instance \cite[Chapter 4]{Teixeira book}.
\end{proof}

\bigskip

\noindent \textsc{Eduardo V. Teixeira} \hfill \textsc{Disson dos Prazeres} \\
\noindent Universidade Federal do Cear\'a  \hfill  Center for Mathematical Modeling \\
\noindent Departamento de Matem\'atica \hfill Universidad de Chile \\
\noindent Campus do Pici - Bloco 914 \hfill Beauchef 851, Edificio Norte - Piso 7\\
\noindent Fortaleza, CE - Brazil  \hfill Santiago, Chile \\
\noindent \texttt{teixeira@mat.ufc.br} \hfill  \texttt{dsoares@dim.uchile.cl}

\end{document}